\documentclass[11pt,reqno,a4paper]{amsart}

\usepackage{amsmath,amssymb,amsfonts,amsthm}
\usepackage{latexsym}
\usepackage[english]{babel}
\usepackage{showidx}
\usepackage{graphics}
%\smartqed  % flush right qed marks, e.g. at end of proof
\usepackage{amsmath}
\usepackage{amssymb}

\newcommand{\rl}{{\mathbb{R}}}
\newcommand{\R}{{\mathbb{R}}}
\newcommand{\dt}{\delta}
\newcommand{\ep}{\varepsilon}

\DeclareMathOperator{\diam}{diam}

\theoremstyle{plain}
\newtheorem{theorem}{Theorem}[section]
\newtheorem{corollary}[theorem]{Corollary}
\newtheorem{lemma}[theorem]{Lemma}
\newtheorem{proposition}[theorem]{Proposition}
\theoremstyle{definition}
\newtheorem{definition}[theorem]{Definition}
\newtheorem{example}[theorem]{Example}
\newtheorem{remark}[theorem]{Remark}
\numberwithin{equation}{section}

\newcounter{remark}[section]
\newcounter{example}[section]
\setcounter{remark}{1}
\setcounter{example}{1}

\def\R{\mathbb R}
\def\ball{I\kern -.35em B}
\def\cball{\overline{I\kern -.35em B}}

\begin{document}
\title [Singular gradient flow] {Singular gradient flow of the distance function and homotopy equivalence}
       \date{September 21, 2011}
       \author{P. Albano
      % $^{\star}$
       \quad P. Cannarsa
       %$^{\star \star}$
       \quad Khai T. Nguyen
       %$^{\star \star\star}$ 
    \quad C. Sinestrari
       %$^{\star \star\star\star}$
       }
      \address{Dipartimento di Matematica\\ 
Universit\`a di Bologna\\
Piazza di Porta San Donato 5\\ 40127 Bologna, ITALY} 
              \email{albano@dm.unibo.it}  
       \address{Dipartimento di Matematica\\ Universit\`a di Roma `Tor Vergata'\\ Via della
 Ricerca Scientifica 1\\ 00133 Roma, ITALY}
 \email{cannarsa@mat.uniroma2.it}
  \address{ Dipartimento di Matematica Pura ed Applicata\\
Universit\`a di Padova\\
 Via Trieste 63\\
35121 Padova, ITALY}
\email{khai@math.unipd.it}
 \address{Dipartimento di Matematica\\ Universit\`a di Roma `Tor Vergata'\\ Via della
 Ricerca Scientifica 1\\ 00133 Roma, ITALY}
\email{sinestra@mat.uniroma2.it}

%\thanks{Supported by }
\subjclass{35A21, 26B25, 49J52, 55P10}

\begin{abstract}
It is a generally shared opinion that significant information about the topology of a bounded domain $\Omega $ of  a  riemannian manifold  $M$ is encoded into the properties of the distance, $d_{\partial\Omega}$,
%, $d:\Omega\rightarrow [0,\infty [$, 
from the boundary of $\Omega$. To confirm such an idea we propose an approach based on the invariance of the singular set of the distance function  with respect to the generalized gradient flow of of $d_{\partial\Omega}$.
As an application, we deduce that such a singular set  has the same homotopy type as $\Omega$. 
\medskip
\par\noindent
\textsc{Keywords: } distance function, generalized characteristics, propagation of singularities, semiconcavity, Riemannian manifold, homotopy
\end{abstract}

\maketitle

\section{Introduction}

The distance function from the boundary of a bounded open set $\Omega\subset\R^n$ is a well-known object.
Defined as
\begin{equation*}
d_{\partial\Omega}(x)=\min_{y\in\partial\Omega}|y-x|\qquad\forall x\in \R^n\,,
\end{equation*}
it is a nonsmooth function that occurs in  different types of context. On account of its very definition, the distance retains some of the smoothness properties of the euclidean norm. For instance, $d_{\partial\Omega}$ is Lipschitz continuous in $\R^n$ with Lipschitz seminorm equal to one, and locally semiconcave in $\Omega$, that is, for every convex compact set $G\subset\Omega$ there is a constant $K_G\in\R$ such that $x\mapsto d_{\partial\Omega}(x)-K_G|x|^2/2$ is concave on $G$.
%---called semiconcavity constant---such that
%\begin{equation*}
%t d(x) + (1-t) d(y) - d(t x + (1-t)y) \leq t(1-t)K_G \frac{|x-y|^2}{2}
%\end{equation*}
%for every $x,y\in\R^n$. 
Moreover, the square of the distance is semiconcave with the uniform constant $K=2$ in the whole space, that is, $x\mapsto d_{\partial\Omega}^2(x)-|x|^2$ is concave in $\R^n$.
Consequently, $d_{\partial\Omega}$ is differentiable almost everywhere in $\R^n$. In particular, the distance is differentiable in $\Omega\setminus\Sigma$, where $\Sigma\subset\Omega$ is a set of Lebesgue measure zero called the singular set of $d_{\partial\Omega}$. In fact, general properties of semiconcave functions ensure that $\Sigma$ is countably $(n-1)$-rectifiable,
% (i.e. can be covered by a countable
%number of lipschitz hypersurfaces), 
see \cite{AAC}.
%is semiconcave on $\R^n$ with semiconcavity costant equal to two, i.e., 
%\begin{equation}\label{eq:SCC}
%t d^2(x) + (1-t) d^2(y) - d^2(t x + (1-t)y) \leq t(1-t)|x-y|^2
%\end{equation}
%for all $x,y\in\R^n$.

%Recall semiconcavity and propagation of singularities.

%We recall that a (locally) semiconcave function can be locally
%represented as the sum of a concave function with a smooth function.
%In particular, a semiconcave function is locally lipschitz continuous
%and, by the Rademacher theorem, it is differentiable except on a set
%of measure $0$. We point out that more precise results are available. 
%Indeed, we call the set of all the points of
%nondifferentiability of a semiconcave function the singular set. 

The structure of the singular set has also been studied from the viewpoint of propagation of singularities. 
For general semiconcave functions, geometric conditions ensuring the propagation of singularities were 
obtained in \cite{AC1}. Subsequently, in \cite{AC2} and \cite{CY}, singularity-propagation results were derived for  viscosity solutions of the Hamilton-Jacobi
equation
\begin{equation}\label{intro:HJ}
F(x,u,Du)=0\quad \mbox{in}\quad\Omega\,,
\end{equation}
with $F(x,u,p)$ convex in $p$. More precisely, given a singular point $x_0$,
which is not critical for $u$, one can show the existence of a nonconstant lipschitz arc $\gamma
:[0,\sigma [\to \R^n$, starting at $x_0$, which consists of points where $u$ fails to be differentiable. Moreover, $\gamma$ is a generalized characteristic of \eqref{intro:HJ}, that is,  a solution of the
differential inclusion 
\begin{equation} 
\label{eq:GenCar}
\gamma' (t)\in \text{ co } D_pF(\gamma (t), u(\gamma (t)), D^+u(\gamma (t)))\,,
\end{equation}
where $D^+u$ denotes the superdifferential of $u$ while `co' stands for `convex hull'.
Observe that the above settings include the distance function, which solves the eikonal equation $|Dd_{\partial\Omega}|^2=1$. In this case, the differential inclusion \eqref{eq:GenCar} reduces to the gradient flow $\gamma' \in D^+d_{\partial\Omega}(\gamma)$---up to rescaling.

The propagation of singularities along characteristics is a well-studied property of solutions to linear hyperbolic equations. In~\cite{D}, for scalar hyperbolic conservation laws in one space dimension, Dafermos  observed that singular arcs could be regarded as generalized solutions of the same differential equation governing the dynamic of classical characteristics. 

The above considerations can be naturally extended to an open subset $\Omega$
of a riemannian manifold $M$. Here, in a local coordinate chart, the eikonal equation
takes the form
\begin{equation}
\label{eq:eikointro}
%\left\{
%\begin{array}{ll}
\langle A^{-1}(x)Du(x),Du(x)\rangle =1\,,
%\qquad &\text{ in }\Omega \\
%u=0 \qquad &\text{ in }\partial \Omega .
%\end{array}
%\right .  
\end{equation}
 where 
$A(x)$ is related to the riemannian scalar product $g_x$ on the tangent space $T_xM$ by the formula
\begin{equation*}
g_x(\xi,\zeta )=\langle A(x)\xi ,\zeta \rangle \qquad\forall \xi,\zeta \in T_xM\,.
\end{equation*}
In this case, the equation of  generalized
characteristics is 
\begin{equation} 
\label{eq:GenGraFlo} 
\gamma '(t)\in A^{-1}(\gamma (t))D^+u(\gamma (t))\,,
\end{equation} 
which will be referred to as  the generalized gradient flow.

The gradient flow of the distance function on a manifold has often been  used in riemannian geometry as a tool for topological applications  in connection with Toponogov's theorem, starting from the seminal paper~\cite{GS} by Grove and Shiohama. A survey of the main results obtained by such techniques can be found in Chapter 11 of \cite{Petersen}. However, these authors' approach differs from the one considered here, in that they used a regularization of the gradient flow of the distance which admits smooth solutions.

In the aforementioned paper \cite{D}, singularities are shown to propagate  along a generalized characteristic, forward in time up to infinity. Intuitively speaking, such a behaviour is related to the well-known interpretation of the entropy condition for solutions of conservation laws (with a convex flux), which ensures that characteristics can only go inside a singularity: a characteristic which enters the singular set remains ``trapped'' there. 

Although  the two-dimensional structure of the problem is essential  for the proof of \cite{D}, one may wonder whether the same property holds for singular arcs of the distance function in arbitrary dimension.  The results of this paper give a precise response to such a question showing that $\Sigma$ is invariant for the generalized gradient flow, in the sense that any solution of \eqref{eq:GenGraFlo}, with $\gamma(0)\in\Sigma$, satisfies $\gamma(t)\in \Sigma$ for every $t\ge 0$. We note that such a property allows $\gamma$ to become constant at some time when a critical point of $d$ has been reached. In order to better explain the main idea of our approach, we will treat the euclidean case first, in Theorem~\ref{th:acn} below, and then extend the analysis to riemannian manifolds, in Theorem~\ref{th:acn2}.

The strategy of proof of our main results can be easily summarized, at an intuitive level, assuming that the arc $\gamma$ is smooth. For the euclidean distance in $\R^n$, basic results from \cite{AC2} ensure  that $\gamma(s)\in\Sigma$ if and only if $|\gamma'(s)|<1$. On the other hand, using properties of semiconcave functions we manage to show that the speed of a singular generalized characteristic satisfies the `logistic' differential inequality
\begin{equation}\label{eq:DI}
\frac{d}{ds}|\gamma'(s)|^2 \leq \frac{2}{d_{\partial\Omega}(\gamma((s))}|\gamma'(s)|^2\Big(\frac K2-|\gamma'(s)|^2\Big)\,,
\end{equation}
where $K$ is a semiconcavity constant for $d_{\partial\Omega}^2$. 
Since the optimal constant for the distance function is $K=2$, the above inequality yields $|\gamma'(s)|<1$ if the same is true for $s=0$, forcing $\gamma(s)\in\Sigma$. 

Surprisingly, the above method works in the riemannian case as well, even though $d_{\partial\Omega}^2$ fails to be semiconcave  with $K=2$, in general. One observation which is crucial for the extension is that, in fact, our idea is based on the existence of a nonlinear transform of the distance function which satisfies a  suitable differential constraint. It turns out that $\cosh (\alpha  \, d_{\partial\Omega})$, where $\alpha>0$ is a suitable constant depending on the sectional curvatures of $M$, has the required properties, see Theorem \ref{semman} below.
In addition, the proof requires the use of some standard tools in riemannian geometry, such as  parallel transport and properties of geodesics.

It is interesting to remark that semiconcave functions, and in particular the  distance function, have been studied in  different domains, often independently. Applications of semiconcavity can be found in nonlinear partial differential equations (\cite{Kruzkhov}, \cite{Krylov}), riemannian manifolds and Alexandrov spaces~(\cite{Perelman}, \cite{Petrunin}), control theory~(\cite{CS}), and
 optimal mass transportation~(\cite{Villani}).
Moreover, the singular set of the distance function is closely related to the cut-locus of the boundary of $\Omega$---a widely studied object in riemannian geometry,  mostly in the case of the distance from a single point (see e.g.\cite{Berger}).

More recently, interest in the properties of the singular set of $d_{\partial\Omega}$ arose in applied domains such as 
%the distance function from the boundary of an open set $\Omega \subset \R^n$ have also been considered by various authors 
computer science (see \cite{CP,L} and the references therein). Several authors have studied, at increasing levels of generality, the homotopy equivalence between $\Sigma$ and $\Omega$.  If the boundary of $\Omega$ is smooth (or piecewise smooth in dimension $2$), then a homotopy can be constructed by moving every point outside $\Sigma$ by the smooth gradient flow of the distance function until it reaches $\Sigma$. However, this method only works if the distance from a regular point $p$ to $\Sigma$ along the gradient of the distance---the so called normal distance from $\Sigma$---is a continuous function of $p$. If $\Omega$ is a nonsmooth set of dimension at least $3$, then this fails to be true and the construction of the homotopy becomes more involved \cite{L}. An essential step of the procedure is to extend the gradient flow of the distance function past a singular point, ensuring that the corresponding generalized characteristic stays singular for all time. This is why our  result about the invariance of the singular set under such a flow can be used to provide an easy proof of the result of \cite{L} and, more importantly, generalize the homotopy equivalence between $\Sigma$ and $\Omega$ to the case of a complete riemannian manifold (see Theorem~\ref{theorem:1} below). 

As a possible application of the analysis developed in this paper, we would like to mention optimal exit time problems  in $\R^n$ that can be subsumed by the riemannian setting. Specifically, we study a  class of time optimal control problems for which the minimum time function $T(x)$, which measures the minimum time needed to steer a point $x\in\Omega$ to $\partial\Omega$, can be interpreted as a riemannian distance function (Example~\ref{ex:mintime} below). As a consequence, we  deduce the homotopy equivalence between $\Omega$ and the singular set of $T(\cdot)$, a result that would be hard to derive keeping the reasoning confined to euclidean space.

This paper is organized as follows. First, in section~\ref{se:notation}, we introduce the essentials of our notation. Then,
in section~\ref{se:eucli}, we show the invariance of $\Sigma$ in an open subset of $\R^n$, and  generalize this result to riemannian settings in section~\ref{se:riema}. 
 Finally, in section~\ref{se:homo}, we prove  homotopy equivalence  and discuss applications to optimal control problems.

\section{Notation}\label{se:notation}

Given $x,y \in \rl^n$, we denote by $\langle x,y \rangle$ and by  $|x|$ the euclidean scalar product  and norm respectively. We set $d(x,y):=|x-y|$ for the distance between two points.

If $M$ is a riemannian manifold and $x \in M$, we denote by $T_xM$ the tangent space to $M$ at $x$ and by $T^*_xM$ the cotangent space. There is a canonical isomorphism between the two spaces given by the scalar product. For simplicity, we use the same symbol $\langle \cdot,\cdot \rangle$ to denote the scalar product of two vectors of $T_xM$, or of two elements of $T^*_xM$, or the pairing of a form in $T^*_xM$ and a vector in $T_xM$. The riemannian distance between two points $x,y \in M$ will be again denoted by $d(x,y)$.

For a function $u$ defined in a subset of $\rl^n$, we denote by $Du$ and $D^2u$ the gradient and  hessian of $u$ where they exist. If $u$ is a function on a riemannian manifold $M$, we denote by $Du(x) \in T_xM$ its gradient and by $du(x) \in T^*_xM$ its differential; although the two notions are equivalent via the isomorphism recalled above, it will be convenient for our purposes to keep them distinct. In addition, we denote by $D^2u$ the hessian of $u$, interpreted as a linear operator from $T_xM$ to itself, as for example in \cite[Ch. 14]{Villani}.
% da completare con il caso di D^2u

If $u$ is a Lipschitz continuous function defined in a riemannian manifold, we denote by $\Sigma(u)$ the set of points where $u$ is not differentiable, and we call such a set the {\em singular set} of $u$. By Rademacher's theorem, $\Sigma(u)$ has Lebesgue measure zero.

If $C$ is a nonempty closed subset of a riemannian manifold $M$ (in particular $M$ could be $\rl^n$), we denote by $d_C(x)$ the {\em distance function} from $C$, defined as
\begin{equation}\label{e.df}
d_C(x)=\min_{y \in C}d(y,x).
\end{equation}
It is well known that $d_C(x)$ is Lipschitz continuous with constant $1$ and that $|D d_C(x)|=1$ at every $x$ where $d_C$ is differentiable.

\section{The euclidean case}\label{se:eucli}

We first recall some properties of semiconcave functions. In this section, $\Omega$ will be an open set in $\rl^n$. 

\begin{definition}
A function $u:\Omega \to \rl$ is called {\em semiconcave} if there exists $K \geq 0$ such that
$$
t u(x) + (1-t) u(y) - u(t x + (1-t)y) \leq t(1-t)K \frac{|x-y|^2}{2}
$$
for any $x,y \in \Omega$ such that the segment from $x$ to $y$ is contained in $\Omega$, and for any $t \in [0,1]$. We call $K$ a {\em semiconcavity constant} for $u$ in $\Omega$. We say that $u$ is locally semiconcave in $\Omega$ if it is semiconcave on any subset $A \subset \subset \Omega$.
\end{definition}

It is easy to see that $u$ is semiconcave with constant $K$ if and only if the function $u(x)-\frac{K}{2}|x|^2$ is concave or if $D^2u \leq K \, \mbox{\it Id}$ in the sense of distributions, where {\it Id} denotes the identity matrix.
%Semiconcave functions according to the above definition are also called semiconcave with modulus $\omega(r)=Kr/2$ in \cite{CS}.

The {\em (Fr\'echet) superdifferential} of a function $u:\Omega \to \rl$ at a point $x \in \Omega$ is defined as the set
$$
D^+u(x)=\left\{ p \in \rl^n ~:~ \limsup_{y \to x} \frac{u(y)-u(x)-\langle p,y-x \rangle}{|y-x|} \leq 0 \right\}.
$$
In the case of a semiconcave function, the superdifferential enjoys the following properties; the proofs can be found in any textbook on convex analysis or in Chapter 3 of \cite{CS}.

\begin{proposition}\label{p.semic}
Let $u:\Omega \to \rl$ be semiconcave.
\begin{enumerate}
\item[(i)]
The function $u$ is locally Lipschitz continuous in $\Omega$ and differentiable almost everywhere. 
\item[(ii)]
The superdifferential $D^+u(x)$ is nonempty for all $x \in \Omega$. It is a singleton if and only if $u$ is differentiable at $x$, and in this case we have $D^+u(x)=\{Du(x)\},$ where $Du(x)$ is the standard gradient.
\item[(iii)]
For any $x,y \in \Omega$ such that the segment from $x$ to $y$ is contained in $\Omega$, for any $p \in D^+u(x)$ and $q \in D^+u(y)$, we have
\begin{equation}
\langle q-p, y-x \rangle \leq K|y-x|^2,
\end{equation}
where $K$ is a semiconcavity constant of $u$.
\item[(iv)]
Given $\{x_n\} \subset \Omega$ such that $x_n \to \bar x \in \Omega$ and $p_n \in D^+u(x_n)$ such that $p_n \to \bar p$, we have that $\bar p \in D^+u(\bar x)$.
\end{enumerate}
\end{proposition}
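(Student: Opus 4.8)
The plan is to reduce every one of the four assertions to a classical fact about concave functions, using the remark made just before the proposition: $u$ is semiconcave with constant $K$ on a convex set if and only if $v(x):=u(x)-\frac{K}{2}|x|^{2}$ is concave there. Since differentiability of $u$ at $x$ is obviously equivalent to that of $v$ at $x$, the whole proof rests on the identity $D^{+}u(x)=Kx+\partial v(x)$, where $\partial v(x):=\{q:\ v(y)\le v(x)+\langle q,y-x\rangle\text{ whenever }[x,y]\subset\Omega\}$ is the \emph{global} concave superdifferential. I would establish this identity first. Expanding $u(y)-u(x)=v(y)-v(x)+K\langle x,y-x\rangle+\frac{K}{2}|y-x|^{2}$ and dividing by $|y-x|$, one sees that the Fr\'echet limsup condition defining $p\in D^{+}u(x)$ is equivalent to $\limsup_{y\to x}\frac{v(y)-v(x)-\langle p-Kx,y-x\rangle}{|y-x|}\le 0$, i.e.\ to $p-Kx\in D^{+}v(x)$; and for a concave function the Fr\'echet superdifferential $D^{+}v(x)$ coincides with $\partial v(x)$. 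The nontrivial inclusion $D^{+}v(x)\subset\partial v(x)$ follows by exploiting concavity along the segment $[x,y]$: writing $v(x+t(y-x))-v(x)\ge t\,(v(y)-v(x))$ for $t\in(0,1]$, dividing by $t|y-x|$ and letting $t\to 0^{+}$ in the Fr\'echet inequality recovers the global supergradient inequality $v(y)\le v(x)+\langle q,y-x\rangle$.

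With this reduction in hand, (i) holds because a finite concave function on an open convex set is locally bounded (bounded above by any supergradient, below by concavity on a small simplex) hence locally Lipschitz by the standard argument, so $u=v+\frac{K}{2}|x|^{2}$ is locally Lipschitz as well; differentiability almost everywhere then follows from Rademacher's theorem (or, more than one needs, from Alexandrov's theorem for convex functions). For (ii), nonemptiness of $\partial v(x)$ at an interior point $x$ is the supporting-hyperplane theorem applied to the convex hypograph of $v$ at $(x,v(x))$, the supporting hyperplane being non-vertical precisely because $x\in\Omega$ and $\Omega$ is open; and the fact that $\partial v(x)$ is a singleton exactly when $v$ is differentiable at $x$, in which case it equals $\{Dv(x)\}$, is the classical characterization (one direction is immediate, the other uses that a set of supergradients with more than one point yields a directional derivative that fails to be linear). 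Translating back via $D^{+}u(x)=Kx+\partial v(x)$ gives the corresponding statements for $u$.

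For (iii), take $p\in D^{+}u(x)$ and $q\in D^{+}u(y)$ with $[x,y]\subset\Omega$, so that $p-Kx\in\partial v(x)$ and $q-Ky\in\partial v(y)$; adding the two supergradient inequalities $v(y)\le v(x)+\langle p-Kx,y-x\rangle$ and $v(x)\le v(y)+\langle q-Ky,x-y\rangle$ gives $0\le\langle (p-Kx)-(q-Ky),\,y-x\rangle$, which rearranges to exactly $\langle q-p,y-x\rangle\le K|y-x|^{2}$. For (iv), if $x_{n}\to\bar x$ and $p_{n}\to\bar p$ with $p_{n}\in D^{+}u(x_{n})$, set $q_{n}:=p_{n}-Kx_{n}\in\partial v(x_{n})$, so $v(y)\le v(x_{n})+\langle q_{n},y-x_{n}\rangle$ for every $y$ with $[x_{n},y]\subset\Omega$; fixing any $y$ with $[\bar x,y]\subset\Omega$, the segments $[x_{n},y]$ lie in $\Omega$ for $n$ large because $\Omega$ is open, and passing to the limit using continuity of $v$ yields $v(y)\le v(\bar x)+\langle\bar p-K\bar x,\,y-\bar x\rangle$, i.e.\ $\bar p-K\bar x\in\partial v(\bar x)$, hence $\bar p\in D^{+}u(\bar x)$. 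I do not anticipate any genuine obstacle: the content is entirely classical convex analysis, and the only point needing a little care is the equivalence between the Fr\'echet superdifferential used in the statement and the global concave superdifferential (used in the reduction step and implicitly in (ii)); everything else is bookkeeping with the affine substitution $v=u-\frac{K}{2}|x|^{2}$.
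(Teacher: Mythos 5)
The paper does not prove this proposition at all---it cites ``any textbook on convex analysis or Chapter 3 of \cite{CS}''---so there is no paper proof to compare against line by line. Your argument is a correct, self-contained version of the standard textbook treatment: the affine substitution $v(x)=u(x)-\frac{K}{2}|x|^{2}$, the verification that $D^{+}u(x)=Kx+\partial v(x)$, the monotonicity of the concave superdifferential for (iii), and a stability-under-limits argument for (iv). The only point that deserved the care you gave it is the equivalence $D^{+}v=\partial v$ for concave $v$, and your one-dimensional argument along the segment $[x,y]$ handles it cleanly; in (iv), the observation that $[x_n,y]\subset\Omega$ for $n$ large because $[\bar x,y]$ is compact in the open set $\Omega$ is also the right way to deal with a possibly non-convex $\Omega$. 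In short, the proposal is correct and coincides with the approach of the reference the paper points to.
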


The distance function $d_C$ provides an example of semiconcave function, as we recall here.
  
\begin{proposition}\label{p.d2}
Given any nonempty closed set $C \subset \rl^n$, the distance function $d_C$ is locally semiconcave on $\rl^n \setminus C$. In addition, the squared distance function $d_C^2(\cdot)$ is semiconcave on all $\rl^n$ with constant $K=2$. Moreover, we have
\begin{equation}\label{e.mond2}
\langle d_C(x)p-d_C(y)q, x-y\rangle \le | x-y|^2
\end{equation}
for all $x,y \in \rl^n, p\in D^+d(x)$ and $q\in D^+d(y)$.
\end{proposition}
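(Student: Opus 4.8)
The plan is to establish the three assertions of Proposition~\ref{p.d2} essentially from the corresponding facts about the \emph{euclidean norm}, exploiting that $d_C$ is an infimum of functions $x\mapsto|x-y|$ over $y\in C$. First I would prove the semiconcavity of $d_C$ away from $C$. Fix a convex compact $G\subset\subset\rl^n\setminus C$, so that $\delta:=d_C(G)>0$. For $x\in G$ and any nearly optimal $y\in C$ with $|x-y|$ close to $d_C(x)$, the function $z\mapsto|z-y|$ is smooth and has second derivative bounded by $1/|x-y|\le 1/(\delta/2)$ on a neighbourhood of $x$; since a pointwise infimum of a family of functions that are uniformly semiconcave with a common constant is again semiconcave with that constant, we get that $d_C-\frac{K_G}{2}|x|^2$ is concave on $G$ with $K_G=2/\delta$. (One has to be slightly careful that the infimizing sequence of $y$'s can be taken in a fixed compact set, which follows because $d_C$ is bounded on $G$, and that the segment joining two points of $G$ stays in the region where the relevant estimates hold; convexity of $G$ handles this.)

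Next, for the squared distance I would use the clean algebraic identity: for each fixed $y$, $|x-y|^2-|x|^2=-2\langle x,y\rangle+|y|^2$ is \emph{affine} in $x$, hence concave. Therefore $d_C^2(x)-|x|^2=\inf_{y\in C}\big(|x-y|^2-|x|^2\big)$ is an infimum of affine (in particular concave) functions, so it is concave on all of $\rl^n$; equivalently $d_C^2$ is semiconcave with constant $K=2$, with no restriction to compact subsets and no exclusion of $C$. This is the slickest of the three parts.

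For the monotonicity estimate~\eqref{e.mond2}, the idea is to differentiate: where $d_C$ is smooth one has $D(d_C^2)(x)=2d_C(x)Dd_C(x)$, and property (iii) of Proposition~\ref{p.semic} applied to the semiconcave function $d_C^2$ (with $K=2$) gives $\langle D(d_C^2)(x)-D(d_C^2)(y),x-y\rangle\le 2|x-y|^2$, which is exactly~\eqref{e.mond2} after dividing by~$2$. To get it for all $x,y$ and all elements of the superdifferentials rather than just at points of differentiability, I would note that $D^+(d_C^2)(x)=2d_C(x)D^+d_C(x)$ (a routine consequence of the chain rule for superdifferentials, valid since $t\mapsto t^2$ is $C^1$ and nondecreasing on the range of $d_C$, with the understanding that $D^+d_C=\{0\}$ wherever $d_C$ is critical), and then invoke Proposition~\ref{p.semic}(iii) directly for $d_C^2$; the segment from $x$ to $y$ is automatically contained in $\rl^n$, so no convexity hypothesis is needed here. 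Alternatively, \eqref{e.mond2} can be proved by hand: pick optimal $\bar y\in C$ for $x$ so that $d_C(x)Dd_C(x)=x-\bar y$, and use $d_C(y)\le|y-\bar y|$ together with the expansion of $|y-\bar y|^2$; doing this symmetrically and adding yields the bound, and this elementary argument extends to superdifferential elements by replacing gradients with the representation $p=\frac{x-\bar y}{|x-\bar y|}$.

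The main obstacle, such as it is, lies in the first part: making rigorous the claim that an infimum of uniformly semiconcave functions is semiconcave requires controlling the infimizing family on a fixed compact set and ensuring the second-order estimate $\|D^2_z|z-y|\|\le 1/|z-y|$ holds uniformly for all relevant $(z,y)$ as $z$ ranges over a segment in $G$; once the uniform lower bound on $|z-y|$ is secured via $\delta=d_C(G)>0$, everything goes through. The second and third parts are comparatively immediate given Proposition~\ref{p.semic}.
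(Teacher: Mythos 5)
Your proof is correct and follows essentially the same route as the paper: for the first two assertions the paper simply cites \cite[Proposition 2.2.2]{CS}, which is exactly the infimum-of-uniformly-semiconcave-functions computation you spell out (your affine-decomposition observation $|x-y|^2-|x|^2=-2\langle x,y\rangle+|y|^2$ is the standard ``easy computation'' behind the $K=2$ claim); for \eqref{e.mond2} the paper, like you, combines Proposition~\ref{p.semic}(iii) applied to $d_C^2$ with the identity $D^+d_C^2(x)=2d_C(x)D^+d_C(x)$.
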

\begin{proof} The statements about the semiconcavity of $d_C$ and $d_C^2$ follow from an easy computation, see e.g. \cite[Proposition 2.2.2]{CS}. Estimate (\ref{e.mond2}) then follows from Proposition \ref{p.semic}(iii), observing that $D^+d^2_C(x)=2d_C(x) D^+d_C(x).$

\end{proof}

In the following we will consider semiconcave functions $u:\Omega \to \rl$ which solve an equation of the form 
\begin{equation}\label{e.ik}
\langle A^{-1}(x) Du(x), Du(x) \rangle =1, \qquad x \in \Omega,
\end{equation}
where $A(x)$ a symmetric positive definite $n \times n$ matrix with $C^1$ dependence on $x \in \Omega$. The formulation of the equation with $A^{-1}$, rather than $A$, is more convenient in view of the application to the distance function on riemannian manifolds in the next section, where $A(\cdot)$ will be the matrix associated with the metric on the tangent space.

It is well known (see e.g. \cite[Prop. 5.3.1]{CS}) that, if $u:\Omega \to \rl$ is semiconcave, then the following properties are equivalent:
\begin{itemize}
\item $u$ satisfies \eqref{e.ik} at every $x \in \Omega$ where $Du(x)$ exists;
\item for every $x \in \Omega$ and $p \in D^+u(x)$ we have $\langle A^{-1}(x) p,p \rangle \leq 1$;
\item $u$ is a viscosity solution of \eqref{e.ik} (in the sense of \cite{CEL}).
\end{itemize}
Throughout the paper, we call a {\em solution} of \eqref{e.ik} a locally semiconcave function satisfying the above properties.

Given a solution of \eqref{e.ik}, we consider the differential inclusion
\begin{equation}\label{eq:genc1}
\gamma' (t)\in A^{-1}(\gamma(t)) D^+u(\gamma (t)).
\end{equation}
A Lipschitz arc $\gamma:[0,t_0] \to \Omega$ is called a solution to the above problem if,  for a.e. $t \in [0,t_0]$, it  satisfies $\gamma' (t)= A^{-1}(\gamma(t)) p(t)$ for some element $p(t) \in D^+u(\gamma (t))$. Such an arc will also be called a {\em generalized characteristic} of equation \eqref{e.ik} associated with $u$.

We now recall some properties of generalized characteristics. The main part of the statement (in particular claim (iv) about the propagation of singularities) follows from the results first proved in \cite{AC2} and then obtained with a simpler approach in \cite{CY,Y}. For the convenience of the reader, we include the proof of some additional properties which were not explicitly observed in the above references.

\begin{theorem}\label{th:pro1} 
Let $u:\Omega \to \rl$ be a solution of \eqref{e.ik}. Then, for every $x_0 \in\Omega$ there exists $t_0 >0$ and a unique Lipschitz continuous arc $\gamma :[0,t_0 [\rightarrow \Omega$ which satisfies \eqref{eq:genc1} and the initial condition $\gamma(0)=x_0$. In addition, the right derivative $\gamma_+'(t)$ exists for {\em every} $t \in [0,t_0[ \,$, and $p(t):=\gamma_+'(t)$ has the following properties:
\begin{enumerate}
\item[(i)]
$p(t) \in A^{-1}(\gamma(t)) D^+u(\gamma(t))$ for every $t\in [0,t_0 [ \,$ and
%. Moreover, $p(t)$ can be characterized by the minimality condition
\begin{equation}\label{e.mi}
\langle  p(t), A(\gamma(t)) p(t) \rangle \leq \langle  q, A(\gamma(t)) q \rangle, 
\;
\forall \, q \in A^{-1}(\gamma(t))D^+u(\gamma(t)).
\end{equation}
\item[(ii)]
$p(t)$ is continuous from the right for every $t\in [0,t_0 [ \,$ and, for all points $t^*$ where it is discontinuous, we have
\begin{equation}\label{e.lsc}
\liminf_{t \to t^*-}\langle  p(t), A(\gamma(t)) p(t) \rangle \geq \langle  p(t^*), A(\gamma(t^*)) p(t^*) \rangle. 
\end{equation}
\item[(iii)]
For any $t \in [0,t_0[\,$,   $\gamma(t) \in \Sigma(u)$ if and only if $\langle A(\gamma(t)) p(t), p(t) \rangle <1$.
\item[(iv)]
If $x_0 \in \Sigma(u)$, then there exists $\sigma\in]0,t_0]$ such that $\gamma(t) \in \Sigma(u)$ for all $t \in [0,\sigma]$.
\item[(v)]
For all $t \in [0,t_0[ \,$ we have
\begin{equation}\label{eqmon}
\frac {d}{dt^+} u(\gamma(t)) = \langle A(\gamma(t)) p(t), p(t) \rangle\,,
\end{equation}
where the symbol $\frac {d}{dt^+}$ denotes the derivative from the right.
\end{enumerate}
\end{theorem}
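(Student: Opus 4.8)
The plan is to localise and then lean on the existing theory of generalized characteristics for eikonal-type equations. First I would fix $x_0$ and a relatively compact neighbourhood $U\ni x_0$ on which $u$ is semiconcave with some constant $K$ and $A,A^{-1}$ are $C^1$ with uniform bounds; then $D^+u$ is uniformly bounded on $U$ by the Lipschitz constant of $u$, and the set-valued map $F(x):=A^{-1}(x)D^+u(x)$ has nonempty, compact, convex values (Proposition~\ref{p.semic}(ii) and convexity of the superdifferential) and is upper semicontinuous (Proposition~\ref{p.semic}(iv) together with continuity of $A^{-1}$). For the existence and uniqueness of $\gamma$, the existence of the right derivative $\gamma_+'(t)$ at \emph{every} $t$, the inclusion $p(t)=\gamma_+'(t)\in F(\gamma(t))$ together with the minimality property \eqref{e.mi} (i.e.\ $p(t)$ is the element of $F(\gamma(t))$ of least $A(\gamma(t))$-energy), and the right-continuity of $p(\cdot)$, I would invoke \cite{AC2,CY,Y} (see also \cite{CS}): in local coordinates our inclusion \eqref{eq:genc1} is exactly of the type treated there, the almost-monotonicity estimate of Proposition~\ref{p.semic}(iii) (plus a modulus of continuity for $A^{-1}$) supplying the dissipativity that makes the construction of the minimal-selection (``slow'') solution work. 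This settles (i) and the first half of (ii).

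Granting this, (iii) is short. If $\gamma(t)\notin\Sigma(u)$ then $D^+u(\gamma(t))=\{Du(\gamma(t))\}$, so $p(t)=A^{-1}(\gamma(t))Du(\gamma(t))$ and, since $u$ solves \eqref{e.ik}, $\langle A(\gamma(t))p(t),p(t)\rangle=\langle A^{-1}(\gamma(t))Du(\gamma(t)),Du(\gamma(t))\rangle=1$. Conversely, if $\gamma(t)\in\Sigma(u)$ the convex set $D^+u(\gamma(t))$ is not a singleton while every $q$ in it satisfies $\langle A^{-1}(\gamma(t))q,q\rangle\le 1$ by the solution property; were the minimum over $D^+u(\gamma(t))$ of this strictly convex quadratic form equal to $1$, all of $D^+u(\gamma(t))$ would lie on the ellipsoid $\{\langle A^{-1}(\gamma(t))q,q\rangle=1\}$, which contains no nontrivial segment, forcing $D^+u(\gamma(t))$ to reduce to a point --- a contradiction. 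Hence that minimum, which equals $\langle A(\gamma(t))p(t),p(t)\rangle$ by \eqref{e.mi}, is $<1$. Part (iv) follows at once: if $x_0\in\Sigma(u)$ then $\langle A(x_0)p(0),p(0)\rangle<1$ by (iii), and since $t\mapsto\langle A(\gamma(t))p(t),p(t)\rangle$ is right-continuous (continuity of $\gamma,A$ and right-continuity of $p$), the strict inequality persists on some $[0,\sigma]$, whence $\gamma(t)\in\Sigma(u)$ there by (iii).

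It remains to establish \eqref{e.lsc} and \eqref{eqmon}, the genuinely new points. For \eqref{e.lsc}, fix a left-discontinuity point $t^*$ of $p$, pick $t_k\uparrow t^*$, and let $\bar p$ be any limit point of the bounded sequence $\{p(t_k)\}$; since $A(\gamma(t_k))p(t_k)\in D^+u(\gamma(t_k))$ and $\gamma(t_k)\to\gamma(t^*)$, Proposition~\ref{p.semic}(iv) gives $A(\gamma(t^*))\bar p\in D^+u(\gamma(t^*))$, i.e.\ $\bar p\in F(\gamma(t^*))$, so \eqref{e.mi} yields $\langle A(\gamma(t^*))p(t^*),p(t^*)\rangle\le\langle A(\gamma(t^*))\bar p,\bar p\rangle=\lim_k\langle A(\gamma(t_k))p(t_k),p(t_k)\rangle$; since this bounds every limit point, passing to the $\liminf$ over the full family gives \eqref{e.lsc}. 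For \eqref{eqmon}, put $q(s):=A(\gamma(s))p(s)\in D^+u(\gamma(s))$ and, for $h>0$, apply the superdifferential inequality --- uniform thanks to semiconcavity --- at $\gamma(t)$ with $q(t)$ and at $\gamma(t+h)$ with $q(t+h)$ to get
\[
\langle q(t+h),\,\gamma(t+h)-\gamma(t)\rangle-o(h)\;\le\;u(\gamma(t+h))-u(\gamma(t))\;\le\;\langle q(t),\,\gamma(t+h)-\gamma(t)\rangle+o(h),
\]
the remainders being $o(h)$ because $|\gamma(t+h)-\gamma(t)|\le Lh$ with $L$ a Lipschitz constant for $\gamma$. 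Dividing by $h$, letting $h\to0^+$, and using $\frac{\gamma(t+h)-\gamma(t)}{h}\to\gamma_+'(t)=p(t)$ together with $q(t+h)\to q(t)$ (right-continuity of $p$ and continuity of $A$), both outer terms converge to $\langle q(t),p(t)\rangle=\langle A(\gamma(t))p(t),p(t)\rangle$, so $\frac{d}{dt^+}u(\gamma(t))$ exists and equals it.

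The step I expect to be the real obstacle is the first one: pinning down a version of the existence--uniqueness--minimal-selection package that genuinely covers the variable metric matrix $A(x)$ rather than only the model case $A=\mbox{\it Id}$. One route is to rerun the arguments of \cite{CY,Y} with the energy $\langle q,A(x)q\rangle$ in place of $|q|^2$ throughout, checking that Proposition~\ref{p.semic}(iii) plus continuity of $A^{-1}$ still deliver the needed dissipativity; another is to pass to geodesic normal coordinates, in which the metric is Euclidean to second order at a point, and control the error terms. Everything downstream of this --- (iii), (iv), \eqref{e.lsc} and \eqref{eqmon} --- is elementary, as sketched above.
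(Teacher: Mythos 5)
Your proposal is correct, and for parts (i), (ii) and (iii) it follows the paper's route essentially verbatim (citing \cite{AC2,CY,Y} for the existence/uniqueness/minimal-selection package, then the upper-semicontinuity argument for \eqref{e.lsc} and the ellipsoid argument for (iii)). The paper in fact resolves your worry about the variable matrix $A(x)$ exactly as you suggest in your last paragraph: it reruns the $|q|^2$-energy estimate with $\langle q,A(x)q\rangle$ and a Lipschitz modulus for $A^{-1}$, which is the content of Lemma~\ref{th:unique}, so that concern is settled.

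Where you genuinely diverge from the paper is in (iv) and especially (v). For (iv) the paper simply cites Corollaries 3.3 and 3.4 of \cite{CY}, whereas you deduce it from (iii) and the right-continuity of $p$ (once one knows $\langle A(\gamma(t))p(t),p(t)\rangle<1$ at $t=0$, right-continuity of that scalar keeps it below $1$ on a right neighbourhood); this is a clean, self-contained alternative. For (v) the paper recalls the construction of $\gamma$ as the uniform limit of smooth arcs $\gamma_k$ solving $\gamma_k'=A^{-1}(\gamma_k)Du_k(\gamma_k)$, extracts weak $L^2$ convergence $\gamma_k'\rightharpoonup\gamma'$, and uses lower semicontinuity of the convex functional $w\mapsto\int\langle A(\gamma)w,w\rangle$ to obtain the lower bound $\liminf_{h\to0^+}\frac{u(\gamma(t+h))-u(\gamma(t))}{h}\ge\langle A(\gamma(t))p(t),p(t)\rangle$, pairing it with the same superdifferential upper bound you use. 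Your proof of (v) is more elementary and does not reach back into the approximating sequence at all: you use the quadratic (semiconcavity-uniform) superdifferential inequality at \emph{both} endpoints $\gamma(t)$ and $\gamma(t+h)$, obtaining two-sided bounds whose outer terms both converge to $\langle A(\gamma(t))p(t),p(t)\rangle$ by right-continuity of $p$ and continuity of $A$. This is correct (the remainders are $O(h^2)$, hence $o(h)$, precisely because of semiconcavity and the Lipschitz bound on $\gamma$) and arguably preferable: it keeps the proof of (v) at the level of the statement rather than the level of the construction, at the small cost of invoking right-continuity of $p$, which the paper's argument avoids for the lower bound.
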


Before proving the theorem, we give an elementary continuous dependence result for generalized characteristics.

\begin{lemma} 
\label{th:unique}
Under the above assumptions, for any $U \subset \subset \Omega$ there exist $C>0$ and $t_0 >0$ such that, if $x,y \in U$ and $\gamma_x,\gamma_y$ are solutions of $(\ref{eq:genc1})$ with initial conditions $\gamma_x(0)=x$ and $\gamma_y(0)=y$ respectively, then
\begin{equation}\label{eq:unique}
| \gamma_x(t)-\gamma_y(t)|\le C |x-y|, \qquad t \in [0,t_0].
\end{equation}
\end{lemma}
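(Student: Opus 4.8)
The plan is to exploit the one-sided monotonicity estimate \eqref{e.mond2} (equivalently Proposition \ref{p.semic}(iii)) for the superdifferential of a solution of the eikonal-type equation. Fix $U \subset\subset \Omega$ and choose an open convex neighborhood $V$ with $U \subset\subset V \subset\subset \Omega$; shrink $t_0>0$ so that every generalized characteristic starting in $U$ stays in $V$ on $[0,t_0]$ (possible by the a priori Lipschitz bound on $\gamma$, since $|\gamma'(t)| = |A^{-1}(\gamma(t))p(t)|$ is bounded on $V$ using that $A^{-1}$ is continuous and $\langle A^{-1}p,p\rangle \le 1$). On $V$, the matrix field $A(\cdot)$ is $C^1$, hence Lipschitz with some constant $L_A$, and $A^{-1}(\cdot)$ is likewise bounded and Lipschitz; write $\mu$ for a lower bound on the smallest eigenvalue of $A$ on $\overline V$.

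The core computation is a Gronwall argument on $\phi(t) := |\gamma_x(t)-\gamma_y(t)|^2$. For a.e. $t$ we have $\gamma_x'(t) = A^{-1}(\gamma_x(t))p(t)$ and $\gamma_y'(t) = A^{-1}(\gamma_y(t))q(t)$ with $p(t)\in D^+u(\gamma_x(t))$, $q(t)\in D^+u(\gamma_y(t))$. Then
\begin{equation*}
\tfrac12 \phi'(t) = \langle A^{-1}(\gamma_x)p - A^{-1}(\gamma_y)q,\ \gamma_x-\gamma_y\rangle.
\end{equation*}
I would split this as $\langle A^{-1}(\gamma_x)p - A^{-1}(\gamma_x)q,\ \gamma_x-\gamma_y\rangle + \langle (A^{-1}(\gamma_x)-A^{-1}(\gamma_y))q,\ \gamma_x-\gamma_y\rangle$. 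The second term is controlled by the Lipschitz bound on $A^{-1}$ and the boundedness of $q$ (note $\langle A^{-1}q,q\rangle\le 1$ gives $|q|\le$ const on $\overline V$), contributing a term $\le C_1\phi(t)$. For the first term, the subtlety is that the monotonicity estimate applies to $D^+u$ directly, not after multiplication by $A^{-1}(\gamma_x)$; but since $A^{-1}(\gamma_x)$ is a fixed symmetric matrix here, I can write $\langle A^{-1}(\gamma_x)(p-q), \gamma_x-\gamma_y\rangle$ and use that $A^{-1}(\gamma_x)$ is bounded together with Proposition \ref{p.semic}(iii): $\langle p-q,\ \gamma_y-\gamma_x\rangle \le K|\gamma_x-\gamma_y|^2$, i.e. $\langle p-q,\ \gamma_x-\gamma_y\rangle \ge -K\phi(t)$. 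Combining, and absorbing the norm of $A^{-1}(\gamma_x)$ into the constant, yields $\tfrac12\phi'(t) \le C\phi(t)$ for a.e. $t$, and then Gronwall gives $\phi(t) \le e^{2Ct}\phi(0)$, which is \eqref{eq:unique} with $C$ renamed.

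The step I expect to be the main obstacle is handling the interplay between $A^{-1}$ and the one-sided monotonicity inequality. The monotonicity estimate \eqref{e.mond2}, or Proposition \ref{p.semic}(iii), gives a sign condition on $\langle p-q, \gamma_x-\gamma_y\rangle$, but what appears in the derivative of $\phi$ is $\langle A^{-1}(\gamma_x)p - A^{-1}(\gamma_y)q, \gamma_x-\gamma_y\rangle$, where the matrices evaluated at the two points differ. The clean way around this is exactly the splitting above: freeze the matrix at $\gamma_x$ for the ``monotone'' part, and treat the matrix increment $A^{-1}(\gamma_x)-A^{-1}(\gamma_y)$ as a perturbation of size $O(|\gamma_x-\gamma_y|)$. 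One must also be careful that $u$ is only \emph{locally} semiconcave on $V$ (semiconcavity may degenerate near $\partial\Omega$), but since $V\subset\subset\Omega$ we do have a uniform semiconcavity constant $K$ on $\overline V$, and the segment joining $\gamma_x(t)$ and $\gamma_y(t)$ lies in the convex set $V$, so Proposition \ref{p.semic}(iii) applies as stated. Finally, I should note that since \eqref{eq:unique} forces $\gamma_x \equiv \gamma_y$ when $x=y$, the lemma simultaneously delivers the uniqueness asserted in Theorem \ref{th:pro1}.
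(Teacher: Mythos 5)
Your outer structure (restrict to a compact convex $V\subset\subset\Omega$ with uniform semiconcavity constant, shrink $t_0$ so the arcs stay in $V$, Gronwall, reduce uniqueness to continuous dependence) matches the paper's, and your handling of the term $\langle (A^{-1}(\gamma_x)-A^{-1}(\gamma_y))q,\ \gamma_x-\gamma_y\rangle$ is correct. But the other half of your decomposition does not close. You need an \emph{upper} bound $\langle A^{-1}(\gamma_x)(p-q),\ \gamma_x-\gamma_y\rangle \le C\phi(t)$, and you propose to deduce it from the one-sided estimate $\langle p-q,\ \gamma_x-\gamma_y\rangle \le K\phi(t)$ of Proposition~\ref{p.semic}(iii) plus boundedness of $A^{-1}$. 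That step is not valid: a one-sided bound on $\langle v,w\rangle$ does not yield a one-sided bound on $\langle Mv,w\rangle$ for a symmetric positive definite $M$ which is not a scalar multiple of the identity. Take $v\perp w$ with $Mv$ having a component along $w$; then $\langle v,w\rangle=0$ while $\langle Mv,w\rangle\sim |v|\,|w|$, which is $O(|w|)$, not $O(|w|^2)$. Since at singular points the jump $v=p-q$ in $D^+u$ does not shrink as $|\gamma_x-\gamma_y|\to 0$, $|v|$ cannot be ``absorbed into the constant.'' (You also wrote the monotonicity inequality in the direction $\ge -K\phi$, which is the wrong side for an upper bound on $\phi'$, though both directions hold; the real obstruction is the matrix issue.)

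The paper sidesteps this by choosing a different Lyapunov functional: not $|\gamma_x-\gamma_y|^2$ but $\langle A(\gamma_x)(\gamma_x-\gamma_y),\,\gamma_x-\gamma_y\rangle$, which is uniformly comparable to the euclidean square on $\overline V$. Differentiating it, the term $2\langle \gamma_x'-\gamma_y',\,A(\gamma_x)(\gamma_x-\gamma_y)\rangle$ pairs $\gamma_x'=A^{-1}(\gamma_x)p$ against $A(\gamma_x)(\gamma_x-\gamma_y)$, so the matrices cancel exactly and leave $\langle p,\gamma_x-\gamma_y\rangle$; the $\gamma_y'$ contribution gives $\langle q,\gamma_x-\gamma_y\rangle$ plus a genuinely $O(\phi)$ error from $A(\gamma_x)A^{-1}(\gamma_y)-I$. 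After this cancellation Proposition~\ref{p.semic}(iii) applies directly, the extra term $\langle (\tfrac{d}{dt}A(\gamma_x))(\gamma_x-\gamma_y),\gamma_x-\gamma_y\rangle$ is trivially $O(\phi)$ since $A\in C^1$, and Gronwall finishes. So the missing idea is to weight the quadratic form by $A(\gamma_x)$ precisely so that the $A^{-1}$ in the flow cancels; with the plain euclidean norm the monotonicity estimate simply does not interact with the conformally distorted flow in the way your argument requires.
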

\begin{proof}
Let us take any ball $B \subset \subset \Omega$ and points $x_1,x_2 \in B$. Given any $p_1 \in D^+u(x_1)$ and $p_2 \in D^+u(x_2)$, we have
\begin{multline*}
\langle A^{-1}(x_1)p_1-A^{-1}(x_2)p_2,A(x_1)(x_1-x_2)\rangle = \langle p_1-A(x_1)A^{-1}(x_2)p_2,
x_1-x_2\rangle 
\\
=\langle p_1-p_2,x_1-x_2\rangle+ \langle (I-A(x_1)A(x_2)^{-1})p_2,x_1-x_2\rangle
\le C_1|x_1-x_2|^2,
\end{multline*}
for some $C_1>0$, where we have used property (iii) of  Proposition \ref{p.semic} and
the Lipschitz continuity of the map $\overline{B}\ni x\mapsto A^{-1}(x)$.

Let now $\gamma_x,\gamma_y$ be two solutions of (\ref{eq:genc1}) contained in $B$. Using the above estimate and the nondegeneracy of the matrix $A(\cdot)$ we find that
\begin{eqnarray*}
\lefteqn{\frac{d}{dt} \langle A(\gamma_x(t))
(\gamma_x(t)-\gamma_y(t)),\gamma_x(t)-\gamma_y(t)\rangle }\\
& = &
 \left \langle \left( \frac{d}{dt} A(\gamma_x(t)) \right)
(\gamma_x(t)-\gamma_y(t)),\gamma_x(t)-\gamma_y(t) \right\rangle \\
&& +
2 \langle \gamma'_x(t)- \gamma'_y(t)), A(\gamma_x(t))(\gamma_x(t)-\gamma_y(t))\rangle \\
& \le & C_2
|\gamma_x(t)-\gamma_y(t)|^2  \\
& \le &
C_3 \langle A(\gamma_x(t))
(\gamma_x(t)-\gamma_y(t)),\gamma_x(t)-\gamma_y(t)\rangle . 
\end{eqnarray*}
The Gronwall inequality yields 
$$
 \langle A(\gamma_x(t))
(\gamma_x(t)-\gamma_y(t)),\gamma_x(t)-\gamma_y(t)\rangle \leq C_4|x-y|^2,
$$
which implies the conclusion. \end{proof}

\begin{proof}{\it of Theorem \ref{th:pro1}} \ 
The existence of an arc satisfying (\ref{eq:genc1}) is proved in \cite[Theorem 3.2]{CY} (see also \cite{AC2}), while  uniqueness follows from Lemma \ref{th:unique}. 

The existence and  continuity from the right of $\gamma'_+(t)$ for every $t$, as well as properties (i) and (iv), follow from Corollaries 3.3 and 3.4 in \cite{CY}. Actually, those results require  the additional assumption that $0 \notin D^+u(x_0)$; however, if $0 \in D^+u(x_0)$ then the arc $\gamma$ is the constant one $\gamma(t) \equiv x_0$, and all the properties of our statement are trivially satisfied. 

To prove (\ref{e.lsc}), let us pick any sequence $t_n \uparrow t^*$ such that $\lim_{n \to 
\infty} p(t_n)$ exists. By (i), we have that $A(\gamma(t_n))p(t_n) \in D^+u(\gamma(t_n))$ for any $n$. Therefore, setting $p_-=\lim_{n \to \infty} p(t_n)$, we have by Proposition \ref{p.semic}(iv) that $A(\gamma(t^*))p_- \in D^+u(\gamma(t^*))$. But then we obtain from (\ref{e.mi}) that 
$$
\langle  p_-, A(\gamma(t^*)) p_- \rangle \geq \langle  p(t^*), A(\gamma(t^*)) p(t^*) \rangle,
$$
which implies (\ref{e.lsc}) and completes the proof of (ii).

To prove (iii), suppose that $u$ is differentiable at $\gamma(t)$. Then, by part (i) and  Proposition \ref{p.semic}(ii), we have that $A(\gamma(t))p(t)=Du(\gamma(t))$. Therefore, since $u$ solves \eqref{e.ik}, $\langle p(t), A(\gamma(t))p(t) \rangle=1$. Conversely, if $u$ is not differentiable at $\gamma(t)$, then $D^+u(\gamma(t))$ contains more than one point.  As recalled previously, a solution $u$ to equation \eqref{e.ik} satisfies $\langle A(\gamma(t))q,q \rangle \leq 1$ for all $q \in D^+u(\gamma(t))$. Since $D^+u(\gamma(t))$ is a convex set containing more than one point and  $A$ is positive definite, we deduce that $\langle A(\gamma(t))q,q \rangle < 1$ for some $q \in D^+u(\gamma(t))$. Then (ii) implies that $\langle p(t), A(\gamma(t))p(t) \rangle<1$.

To prove property (v), we have to recall some details of the proof of the existence of the singular arc $\gamma$ given in \cite{CY}. The authors introduce there a family of smooth functions $u_k$ converging uniformly to $u$ with bounded Lipschitz constant. The arc $\gamma$ is then obtained as the uniform limit of a sequence of smooth arcs $\gamma_k$ which solve the equation $$\gamma'_k(t)=A^{-1}(\gamma_k(t))Du_k(\gamma_k(t)).$$
The above properties easily imply that we also have weak convergence $\gamma'_k \rightharpoonup \gamma'$ in $L^2([0,t_0], \rl^n)$. If we exploit the lower semicontinuity of convex functionals with respect to weak convergence, we obtain, for any $0 \leq t_1 <t_2 \leq t_0$, 
\begin{eqnarray*} \lefteqn{
\liminf_{k \to \infty} \int_{t_1}^{t_2} \langle A(\gamma_k(t)) \gamma'_k(t), \gamma'_k(t) \rangle dt}  \\
& = & \liminf_{k \to \infty} \int_{t_1}^{t_2} \langle A(\gamma(t)) \gamma'_k(t), \gamma'_k(t) \rangle dt \\
& &+
\lim_{k \to \infty} \int_{t_1}^{t_2} \langle [A(\gamma_k(t)) - A(\gamma(t))]\gamma'_k(t), \gamma'_k(t) \rangle dt
 \\
& \geq  &\int_{t_1}^{t_2} \langle A(\gamma(t)) \gamma'(t), \gamma'(t) \rangle dt,
\end{eqnarray*}
where we have also used the uniform convergence of $\gamma_k$ and the boundedness of $\gamma'_k$. It follows that
\begin{eqnarray*}
\lefteqn{u(\gamma(t_2))-u(\gamma(t_1)  =  \lim_{k \to \infty} u_k(\gamma_k(t_2)) - u_k(\gamma_k(t_1))} \\
 &= & \lim_{k \to \infty} \int_{t_1}^{t_2} \langle Du_k(\gamma_k(t)), \gamma'_k (t)\rangle \, dt 
 =  \lim_{k \to \infty} \int_{t_1}^{t_2} \langle A(\gamma_k(t)) \gamma'_k (t), \gamma'_k (t)\rangle \, dt\\
& \geq & \int_{t_1}^{t_2} \langle A(\gamma(t)) \gamma'(t), \gamma'(t) \rangle \, dt
= \int_{t_1}^{t_2} \langle A(\gamma(t)) p(t), p(t) \rangle \, dt,
\end{eqnarray*}
which implies 
$$
\liminf_{h \to 0^+} \frac{u(\gamma(t+h))-u(\gamma(t))}{h} \geq  \langle A(\gamma(t)) p(t), p(t) \rangle,
$$
for every $t \in [0,t_0[ \,$, by the right continuity of $p$. On the other hand, since $A(\gamma(t)) p(t) \in D^+u(\gamma(t))$, we also have
\begin{eqnarray*}
\limsup_{h \to 0^+} \frac{u(\gamma(t+h))-u(\gamma(t))}{h} & \leq & \lim_{h \to 0^+} 
\left\langle A(\gamma(t)) p(t), \frac{\gamma(t+h)-\gamma(t)}{h} \right\rangle \\
 & = & \langle A(\gamma(t)) p(t), p(t) \rangle.
\end{eqnarray*}
This concludes the proof. 
\end{proof}

\begin{corollary}\label{c.prop}
Let $\Omega \subset \R^n$ be an open set.
Then, for every $x\in\Omega$ there exists a
unique Lipschitz continuous arc $\gamma :[0,\infty [\rightarrow \Omega$
such that 
\begin{equation}
\label{eq:genc}
\gamma ' (t)\in D^+d_{\partial \Omega}(\gamma(t))\quad t\in [0,\infty [\text{ a.e.}
\qquad \gamma (0)=x. 
\end{equation}
In addition, for any $t_0 \geq 0$ such that $\gamma(t_0) \in \Sigma(d_{\partial \Omega})$ there exists $\sigma>0$ such that $\gamma(t) \in \Sigma(d_{\partial \Omega})$ for all $t \in [t_0,t_0+\sigma[\, $. 
Finally, the derivative from the right $\gamma'_+(t)$ exists for all $t \in [0,+\infty)$ and satisfies the properties described in Theorem \ref{th:pro1}, with $u(x)=d_{\partial \Omega}(x)$ and $A(x)\equiv Id$.
\end{corollary}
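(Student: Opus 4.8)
The plan is to derive everything from Theorem~\ref{th:pro1} specialized to $u=d_{\partial\Omega}$ and $A(x)\equiv Id$, the only point requiring a genuinely new argument being the extension of the local arc to the whole half-line. First I would check that this specialization is legitimate: by Proposition~\ref{p.d2} the function $d_{\partial\Omega}$ is locally semiconcave on $\Omega$, and since $|Dd_{\partial\Omega}(x)|=1$ at every point where the gradient exists, $d_{\partial\Omega}$ is a solution of \eqref{e.ik} with $A\equiv Id$ in the sense fixed before Theorem~\ref{th:pro1}; in particular $\langle p,p\rangle\le 1$ for every $p\in D^+d_{\partial\Omega}(x)$. Hence, for each $x\in\Omega$, Theorem~\ref{th:pro1} provides some $t_x>0$ and a unique Lipschitz arc on $[0,t_x[$ solving \eqref{eq:genc} with $\gamma(0)=x$, whose right derivative $p(t)=\gamma'_+(t)$ exists for every $t$ and satisfies properties (i)--(v); note that $|p(t)|\le 1$, so $\gamma$ is $1$-Lipschitz on its interval of definition.

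Next I would establish global existence. Let $T$ be the supremum of the times up to which a solution of \eqref{eq:genc} with $\gamma(0)=x$ is defined; such a solution is unique by Lemma~\ref{th:unique} together with a standard connectedness argument on compact subintervals, so $\gamma$ is well defined on $[0,T[$ and $0<T\le\infty$. Suppose, for contradiction, that $T<\infty$. Being $1$-Lipschitz, $\gamma$ extends continuously to $t=T$ with some value $\bar x\in\overline{\Omega}$; but by property (v) of Theorem~\ref{th:pro1} we have $\frac{d}{dt^+}d_{\partial\Omega}(\gamma(t))=|p(t)|^2\ge 0$, so $t\mapsto d_{\partial\Omega}(\gamma(t))$ is nondecreasing and therefore $d_{\partial\Omega}(\bar x)\ge d_{\partial\Omega}(x)>0$, which forces $\bar x\in\Omega$. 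Applying Theorem~\ref{th:pro1} at $\bar x$ produces a solution of \eqref{eq:genc} on $[T,T+\tau[$ for some $\tau>0$; concatenating it with $\gamma$ yields a $1$-Lipschitz arc with values in $\Omega$ that satisfies the differential inclusion for a.e. $t\in[0,T+\tau[$, contradicting the definition of $T$. Hence $T=\infty$, and the same extension-and-patching procedure shows that $\gamma$, its right derivative $\gamma'_+$, and properties (i)--(v) (which are all pointwise in $t$) are available on the whole of $[0,\infty[$; in particular $d_{\partial\Omega}(\gamma(t))\ge d_{\partial\Omega}(x)>0$ keeps $\gamma(t)\in\Omega$ for every $t$.

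Finally, for the propagation statement, fix $t_0\ge 0$ with $\gamma(t_0)\in\Sigma(d_{\partial\Omega})$. The shifted arc $t\mapsto\gamma(t_0+t)$ solves \eqref{eq:genc} with initial point $\gamma(t_0)$, so by uniqueness it coincides with the arc furnished by Theorem~\ref{th:pro1} starting at $\gamma(t_0)$; since $\gamma(t_0)\in\Sigma(d_{\partial\Omega})$, part (iv) of that theorem gives $\sigma>0$ such that $\gamma(t_0+t)\in\Sigma(d_{\partial\Omega})$ for all $t\in[0,\sigma]$, that is $\gamma(t)\in\Sigma(d_{\partial\Omega})$ for all $t\in[t_0,t_0+\sigma[$.

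The only real difficulty is the global existence step, and it is resolved by the monotonicity of $d_{\partial\Omega}$ along the flow (property (v) of Theorem~\ref{th:pro1}): this keeps the arc uniformly bounded away from $\partial\Omega$, so no finite-time escape toward the boundary can occur. Everything else is a direct application of Theorem~\ref{th:pro1}, with local uniqueness from Lemma~\ref{th:unique} used only to glue the pieces consistently.
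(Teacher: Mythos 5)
Your proposal is correct and follows essentially the same route as the paper: apply Theorem~\ref{th:pro1} (after verifying that $d_{\partial\Omega}$ is a locally semiconcave solution of~\eqref{e.ik} with $A\equiv Id$), and rule out finite-time escape to $\partial\Omega$ using the monotonicity $\frac{d}{dt^+}\,d_{\partial\Omega}(\gamma(t))=|p(t)|^2\ge 0$ from~\eqref{eqmon}. You simply spell out the continuation-and-patching step and the time shift for the propagation claim in more detail than the paper, which leaves those routine points implicit.
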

\begin{proof}
The statement follows directly from Theorem \ref{th:pro1}, provided we show that the maximal interval of existence of  $\gamma$ is $[0,+\infty[\,$. To see this we note that, if such an interval is $[0,T[\,$ with $T \neq +\infty$, then necessarily $\gamma(t)$ approaches $\partial \Omega$ as $t \to T$, that is, $d_{\partial \Omega}(\gamma(t)) \to 0$ as $t \to T$, in contrast with the property that $d_{\partial \Omega}(\gamma(t))$ is positive and nondecreasing in $t$ by (\ref{eqmon}). 
\end{proof}

We are now ready to prove the main result of this section.

\begin{theorem}
\label{th:acn} 
Let $\Omega \subset \R^n$ be an open set, let
$x\in\Omega$, and let $\gamma(\cdot)$ be the solution of  \eqref{eq:genc} given by Corollary \ref{c.prop}. If $\gamma(t_0) \in \Sigma(d_{\partial \Omega})$ for some $t_0 \geq 0$, then $\gamma(t) \in \Sigma(d_{\partial \Omega})$ for all $t \in [t_0,+\infty[$.
\end{theorem}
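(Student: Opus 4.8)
The plan is to bootstrap from the local statement in Theorem~\ref{th:pro1}(iv) (equivalently, the local part of Corollary~\ref{c.prop}) to a global one, using the quantitative information contained in parts (i)--(iii) and (v) of Theorem~\ref{th:pro1}. The key is to show that the quantity $s\mapsto\langle p(s),p(s)\rangle$ (here $A\equiv \mathrm{Id}$, so this is just $|\gamma'_+(s)|^2$) can never jump up to the value $1$ once it has started strictly below $1$. Once this is established, part (iii) tells us that $\gamma(t)$ stays in $\Sigma(d_{\partial\Omega})$ for every $t\ge t_0$.

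First I would reduce to the case $t_0=0$ by restarting the characteristic, and I would set $\vp(s):=|\gamma'_+(s)|^2=\langle p(s),p(s)\rangle$ and $f(s):=d_{\partial\Omega}(\gamma(s))$. By Corollary~\ref{c.prop}, $f$ is positive and nondecreasing, and by (iii) we need exactly $\vp(s)<1$ for all $s\ge 0$. The heart of the argument is the differential inequality
\begin{equation}\label{eq:plan-DI}
\frac{d}{ds^+}\vp(s)\ \le\ \frac{2}{f(s)}\,\vp(s)\bigl(1-\vp(s)\bigr)
\end{equation}
wherever $\vp$ is differentiable from the right. To obtain \eqref{eq:plan-DI}, fix $s$ and $h>0$ small, write $\gamma(s+h)-\gamma(s)\approx h\,p(s)$, and apply the monotonicity estimate \eqref{e.mond2} of Proposition~\ref{p.d2} to the pair of points $\gamma(s)$, $\gamma(s+h)$ with $p=d_{\partial\Omega}(\gamma(s))^{-1}$-rescaled superdifferential elements: since $p(s)\in D^+d_{\partial\Omega}(\gamma(s))$ and $p(s+h)\in D^+d_{\partial\Omega}(\gamma(s+h))$, \eqref{e.mond2} gives
$$
\langle f(s)p(s)-f(s+h)p(s+h),\,\gamma(s)-\gamma(s+h)\rangle\ \le\ |\gamma(s)-\gamma(s+h)|^2 .
$$
Dividing by $h^2$, using $|\gamma'_+|^2\le 1$ so $|\gamma(s+h)-\gamma(s)|\le h$, letting $h\to 0^+$, and bookkeeping the cross terms (here one uses the right-continuity of $p$ from Theorem~\ref{th:pro1}(ii) and that $\langle p(s),p(s+h)\rangle\to\vp(s)$) yields, after rearrangement, the inequality \eqref{eq:plan-DI}; the factor $2$ and the shape $\vp(1-\vp)$ come precisely from the optimal semiconcavity constant $K=2$ for $d_{\partial\Omega}^2$, exactly as in \eqref{eq:DI} of the introduction.

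With \eqref{eq:plan-DI} in hand, I would argue by contradiction: suppose $t^\ast:=\inf\{s> 0:\ \vp(s)=1\}<+\infty$. By part (iv) (applied at every point where $\gamma$ is singular) together with part (iii), $\vp(s)<1$ on $[0,t^\ast)$, so $t^\ast>0$ by the local statement. On $[\,t^\ast/2,\,t^\ast)$ we have $f(s)\ge f(t^\ast/2)=:c>0$ (monotonicity of $f$), so \eqref{eq:plan-DI} gives $\frac{d}{ds^+}\vp\le \frac{2}{c}\vp(1-\vp)$; integrating this logistic inequality from $t^\ast/2$, where $\vp(t^\ast/2)=:\vp_0<1$, keeps $\vp$ bounded away from $1$ on all of $[\,t^\ast/2,\,t^\ast)$ — one has the explicit comparison $\vp(s)\le \psi(s)$ with $\psi$ the solution of the logistic ODE, and $\psi(s)<1$ for all finite $s$. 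Finally, to rule out that $\vp$ reaches $1$ by an upward jump at $t^\ast$ itself, I would invoke the one-sided semicontinuity \eqref{e.lsc}: at a discontinuity point $\liminf_{s\to t^\ast-}\vp(s)\ge\vp(t^\ast)$, so $\vp(t^\ast)\le\limsup_{s\to t^\ast-}\psi(s)<1$, contradicting $\vp(t^\ast)=1$. Hence no such $t^\ast$ exists, $\vp<1$ on $[0,+\infty)$, and (iii) finishes the proof.

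I expect the main obstacle to be the careful justification of \eqref{eq:plan-DI} at the level of the right derivative — in particular, controlling the cross terms $\langle p(s),p(s+h)-p(s)\rangle$ and the error in $\gamma(s+h)-\gamma(s)=h\,p(s)+o(h)$ uniformly, since $p$ need only be right-continuous, not continuous, and $\vp$ itself may be merely of bounded variation with downward jumps. This is where the precise statements in Theorem~\ref{th:pro1}(i)--(ii) — that $p(t)=\gamma'_+(t)$ exists everywhere, is right-continuous, realizes the minimal norm in $D^+d_{\partial\Omega}(\gamma(t))$, and satisfies the left-limit inequality \eqref{e.lsc} — must be used in full strength, rather than the naive "assume $\gamma$ smooth" heuristic of the introduction.
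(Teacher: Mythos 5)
The heart of your proposal is exactly the logistic differential inequality \eqref{eq:DI}, and you correctly identify the source of the constants ($K=2$ for $d_{\partial\Omega}^2$, via \eqref{e.mond2}) and the conclusion you want (via Theorem~\ref{th:pro1}(iii)). But there is a genuine gap at the integration step, and you yourself flag it in your last paragraph without resolving it: you want to integrate a differential inequality for $\vp(s)=|p(s)|^2$, but $p(s)=\gamma'_+(s)$ is only known to exist everywhere and be right-continuous. It is not claimed to be differentiable, absolutely continuous, or even of bounded variation in $s$ (it arises as a pointwise right-derivative of a Lipschitz arc obtained as a weak $L^2$-limit). Consequently, a pointwise inequality on $\tfrac{d}{ds^+}\vp$ at those $s$ where the right derivative exists cannot be integrated to yield the comparison $\vp(s)\le\psi(s)$ with the logistic solution; there is no Newton--Leibniz formula available for $\vp$. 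The contradiction argument at $t^\ast$ inherits this gap, since the comparison on $[t^\ast/2,t^\ast)$ is never justified.

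The paper circumvents exactly this obstacle by \emph{not} differentiating $|p(s)|^2$. Instead it introduces the finite-difference quotient $\gamma_h(s)=\bigl(\gamma(s+h)-\gamma(s)\bigr)/h$ for fixed $h>0$, observes that $|\gamma_h(\cdot)|^2$ is Lipschitz in $s$ (hence absolutely continuous), and derives from \eqref{e.mond2} the pointwise inequality \eqref{eq:e3} for $(|\gamma_h(s)|^2)'$. This can be legitimately divided by $(|\gamma_h(s)|^2+\ep)(1+\ep-|\gamma_h(s)|^2)$ and integrated in $s$, yielding \eqref{eq:e5}. Only after this integration does one pass to the limit $h\downarrow 0$ (using $\gamma_h(s)\to p(s)$ and $\delta_h(s)\to|p(s)|^2$ from \eqref{eq:e3b}), and then $\ep\downarrow 0$, to obtain the explicit quantitative bound \eqref{eq:p_bound} on $|p(t)|^2$ valid for every $t\ge 0$. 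This order of operations — integrate a regularized DI, then pass to the limit — is the missing idea in your proposal. With it, the contradiction structure and the invocation of \eqref{e.lsc} become unnecessary, since the final bound is direct.
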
 
\begin{proof}
For simplicity of notation, we suppose $t_0=0$. Set 
$$
p(t):=\gamma'_+(t), \qquad \delta (t):=d_{\partial \Omega}(\gamma (t))\qquad t\in [0,\infty [\, .
$$
From Theorem \ref{th:pro1} and Corollary \ref{c.prop} we know  that $p(t) \in D^+d_{\partial \Omega}(\gamma(t))$ for all $t$; in addition $\gamma(t) \in \Sigma(d_{\partial \Omega})$ and $|p(t)|<1$ for all $t>0$ in a right neighbourhood of $0$. Our aim is to show that $|p(t)|<1$ holds for every $t \geq 0$; by part (iii) of Theorem \ref{th:pro1}, this will prove our assertion.

Let $0\le s<t$. By Proposition \ref{p.d2}, we have 
\begin{multline}
\label{eq:e2}
\delta(t)\langle p(t)-p(s),\gamma (t)-\gamma(s)\rangle 
\\
\le |\gamma (t)-\gamma(s)|^2- (\delta(t)-\delta(s)) \langle p(s),\gamma (t)-\gamma(s)\rangle . 
\end{multline}
Now, set $t=s+h$ ($h>0$), 
$$
\gamma_h(s)=\frac{\gamma (s+h)-\gamma (s)}{h}\qquad \text{ and }\qquad  \delta_h(s)=\frac{\delta (s+h)-\delta (s)}{h}. 
$$
We observe that $|\gamma_h| \leq 1,|\delta_h| \leq 1$, since $\gamma$ and $\delta$ are both $1$-Lipschitz function. From (\ref{eq:e2}) we obtain
\begin{equation}
\label{eq:e3}
\left\langle \gamma'_h(s)
\, , \,  \gamma_h(s)\right\rangle \le \frac{1}{\delta(s+h)}[ |\gamma_h(s)|^2 - \delta_h(s) \langle p(s),\gamma_h(s)\rangle ].
\end{equation}
Here and in the rest of the proof, we use for simplicity the notation of the ordinary derivative to mean the derivative from the right of expressions involving $\gamma_h$.
Observe that, by Theorem \ref{th:pro1}, we have for all $s \geq 0$
\begin{equation}
\label{eq:e3b}
\lim_{h \to 0^+}\gamma_h(s)=p(s), \qquad \lim_{h \to 0^+} \delta_h(s)=|p(s)|^2. 
\end{equation}

From a heuristic point of view, it is useful to take the limit as $h \downarrow 0$ in (\ref{eq:e3}). We obtain 
\begin{equation}\label{e.int}
\frac{d}{ds}|p(s)|^2 \leq \frac{2}{\delta(s)}|p(s)|^2(1-|p(s)|^2).
\end{equation}
Such an inequality implies that, if $|p(t_0)|<1$, then $|p(s)|<1$ for all $s >t_0$. However, such a reasoning is only formal, because we cannot say anything about the differentiability of $p(\cdot)$. 
It is interesting to observe that the crucial constant $1$, in the expression $(1-|p(s)|^2)$ above, arises from the previous computations as $K/2$, where $K=2$ is the semiconcavity constant of $d_{\partial \Omega}^2$.

Although the above argument is not rigorous, it suggests that $\gamma_h(\cdot)$ can be estimated by a suitable adaptation of the separation of variables procedure which could be used to integrate \eqref{e.int}. To do this, let us first fix $\ep>0$ small. Since $0 \leq |\gamma_h(s)| \leq 1$ for all $s \geq 0, h>0$, we have that
\begin{equation}\label{e:ps}
(|\gamma_h(s)|^2+\ep)(1+\ep-|\gamma_h(s)|^2) \geq  \ep(1+\ep) >0, \quad s \geq 0, h>0.
\end{equation}
Thus, we can divide both sides of \eqref{eq:e3}  by the above expression to obtain
$$
\frac{(|\gamma_h(s)|^2)'}
{(|\gamma_h(s)|^2+\varepsilon) (1+\varepsilon-|\gamma_h(s)|^2)}
\le \frac{2}{\delta(s+h)}\, \frac{|\gamma_h(s)|^2-\delta_h(s)\langle p(s),\gamma_h(s)\rangle}{(|\gamma_h(s)|^2+\varepsilon) (1+\varepsilon-|\gamma_h(s)|^2)}\,.
$$
Integrating over $[0,t]$, we find
$$
\log \Big ( \frac{|\gamma_h(s)|^2+\varepsilon}{1+\varepsilon-|\gamma_h(s)|^2}\Big )  \Big|_0^t  \le F_{h,\ep}(t)\,,
$$ 
where 
$$
F_{h,\ep}(t)= (1+2\ep)\int_0^t\frac{2}{\delta(s+h)}\, \frac{|\gamma_h(s)|^2-\delta_h(s)\langle p(s),\gamma_h(s)\rangle}{(|\gamma_h(s)|^2+\varepsilon) (1+\varepsilon-|\gamma_h(s)|^2)}\ ds\,.
$$
Therefore, 
\begin{equation}
\label{eq:e5}
| \gamma_h(t)|^2 \le \frac{e^{F_{h,\ep}(t)} \xi_{h,\ep}(1+\varepsilon)-\varepsilon}{1+e^{F_{h,\ep}(t)} \xi_{h,\ep}} = 1+\ep  - \frac{1+2\ep}{1+e^{F_{h,\ep}(t)} \xi_{h,\ep}}\,,
\end{equation}
where 
$$
\xi_{h,\ep}:=\frac{|\gamma_h(0)|^2+\varepsilon }{1+\varepsilon-|\gamma_h(0)|^2 }\,.
$$
We now want to let first $h \downarrow 0$ and then $\ep \downarrow 0$. By (\ref{eq:e3b}) we find,
\begin{equation*}
\lim_{\ep \downarrow 0} \left( \lim_{h\downarrow 0}\xi_{h,\ep}\right) = \lim_{\ep \downarrow 0} \frac{|p(0)|^2+\varepsilon}{1+\varepsilon-|p(0)|^2}= \frac {|p(0)|^2}{1-|p(0)|^2}\,.
\end{equation*}
On the other hand, since by \eqref{e:ps} the integrand in the definition of $F_{h,\ep}$ is uniformly bounded in $h$, we obtain, again by (\ref{eq:e3b}),
\begin{eqnarray*}
\lim_{h\downarrow 0}F_{h,\ep}(t) & = & (1+2\varepsilon)\int_0^t \frac{2}{\delta(s)}\,\frac{|p(s)|^2(1-|p(s)|^2)}{(|p(s)|^2+\varepsilon) (1+\varepsilon-|p(s)|^2)}\,ds  \\
& \leq & (1+2\varepsilon)\int_0^t \frac{2}{\delta(s)}\,ds \,,
\end{eqnarray*}
which implies
\begin{equation*}
\limsup_{\ep \downarrow 0} \left( \lim_{h\downarrow 0}F_{h,\ep}(t)\right)=
\int_0^t \frac{2}{\delta(s)}\,ds\, =: \alpha (t)\,.
\end{equation*}
Thus, letting first $h\downarrow 0$ and then $\ep \downarrow 0$ in \eqref{eq:e5}, we conclude that 
\begin{equation}\label{eq:p_bound}
|p(t)|^2\le \frac{|p(0)|^2e^{\alpha (t)}}{|p(0)|^2 e^{\alpha (t)}
  +1-|p(0)|^2}\,.
\end{equation}
Therefore, $|p(t)|^2<1$ for every $t \geq 0$, and the proof is complete. 
\end{proof}

\section{The riemannian case}\hspace{5 mm}  \label{se:riema}

Let us now consider a complete riemannian manifold $M$, possibly noncompact. To extend to this framework the techniques of the previous section, we first need to recall some basic properties of  parallel transport and  geodesic curves. On $M$, there is a canonical notion of derivative of a vector field, called covariant derivative. Using this definition, a vector field is called {\em parallel} along a curve $\gamma$ if its derivative in direction $\gamma'(t)$ is zero for all $t$. If we have a curve $\gamma:[a,b] \to M$ and a vector $v \in T_{\gamma(a)}M$, there is a unique vector field $v(t) \in T_{\gamma(t)}M$, with $t \in [a,b]$, which is parallel along $\gamma$; such a field $v(t)$ is called the {\em parallel transport} of $v$ along $\gamma(t)$. Parallel transport preserves the scalar product and therefore gives an isometry between the tangent spaces at different points.
The {\em geodesics} on $M$ can be defined equivalently as the curves $\gamma$ such that the speed $\gamma'(t)$ is parallel along the curve $\gamma$ itself or as the curves which are stationary for the energy functional. Geodesics have constant speed and are curves of minimal length between two endpoints if these points are close enough to each other.

Given a point $x \in M$, we denote by $\exp_x(\cdot)$ the exponential map at $x$. We recall that, given a tangent vector $v \in T_x M$, $\exp_x(v)$ is the point reached at $t=1$ by the geodesic $\gamma(t)$  starting with $\gamma(0)=x$ and $\gamma'(0)=v$. If $f$ is a smooth function and $df(x) \in T^*_xM$ is its differential at $x$, we have
$$
f( \exp_x(v))-f(x) = \langle df(x),v \rangle + o(|v|), \qquad   v \in T_xM, \quad v \to 0.
$$

Let us now consider a function $u:M \to \rl$ not necessarily smooth. We say that $p \in T^*_xM$ belongs to $d^+u(x)$, the {\em superdifferential} of $u$ at $x$, if
$$
u( \exp_x(v))-u(x) \leq \langle p,v \rangle + o(|v|), \qquad v \in T_xM, \quad v \to 0.
$$
This is equivalent to saying that there exists a smooth function $f$ touching $u$ from above at $x$ such that $df(x)=p$. It is easy to see that, if $p \in d^+u(x)$ and if $\gamma:[-a,a] \to M$ is any smooth curve such that $\gamma(0)=x$ (not necessarily a geodesic), then
\begin{equation}
\label{e.curve}
\limsup_{h \to 0}  \frac {u(\gamma(h)) -u(\gamma(0))}{h} \leq \langle p, \gamma'(0) \rangle.
\end{equation}

We recall that a subset $U \subset M$ is called convex if any distance minimizing geodesic between two points in $U$ is contained in $U$. The notion of semiconcavity can be extended to riemannian manifolds as follows.

\begin{definition}
A function $u:U \to \rl$, with $U \subset M$ convex, is called
{\em semiconcave} in $U$ with constant $K$  if, for every geodesic $\gamma:[0,1] \to U$ and $t \in [0,1]$, we have
\begin{equation}\label{eq:semiri}
(1-t) u(\gamma(0)) + t u(\gamma(1)) - u(\gamma(t)) \leq t(1-t)\,K\, \frac{d(\gamma(0),\gamma(1))^2}{2}\,.
\end{equation}
\end{definition}
A detailed exposition of the basic properties of semiconcave functions on a manifold is given in \cite{Villani}. 
Notice that, in such a reference,  functions satisfying \eqref{eq:semiri} are called ``semiconcave with modulus $\omega(t)=Kt^2/2$''. 

It can be checked (see Proposition 10.12 and inequality (10.14) in \cite{Villani}) that, if $u$ is semiconcave with constant $K$, then its superdifferential is nonempty at each point. In addition, any $p \in d^+u(x)$ satisfies
$$
u( \exp_x(v))-u(x) \leq \langle p,v \rangle + K \frac{|v|^2}{2}
$$
for all $x \in U, v \in T_xM$ such that $\exp_x(v) \in U.$

Denote by $\gamma(\cdot)$ the geodesic $\gamma(t)=\exp_x(tv)$ starting at $x$ with speed $v$. If we set $y=\gamma(1)$ and $w=\gamma'(1) \in T_yM$, we have $x=\exp_y(-w)$. Thus, if $q \in d^+u(y)$, we find
$
u(x)-u(y) \leq -\langle q, w \rangle + K |w|^2/2.
$
Since $|w|=d(x,y)=|v|$, we can sum up with the previous inequality to obtain
\begin{equation}\label{e.mon1}
\langle q, w \rangle - \langle p,v \rangle  \leq K |v|^2.
\end{equation}
Denote by $\Pi:T_xM \to T_yM$ the parallel transport along the geodesic $\gamma$. Since parallel transport preserves the scalar product and  $\gamma$ is a geodesic, we have $w=\Pi v$ and $\langle q, w \rangle = \langle \Pi^{-1}(q), v \rangle$.
We conclude that the above inequality can be rewritten as
\begin{equation}\label{monotone}
\langle \Pi^{-1}(q)-p,v \rangle \leq K |v|^2
\end{equation}
for all $v \in T_x M$ with $\exp_x(v) \in U$, and any  $p \in d^+u(x)$ and $q \in d^+u(\exp_x(v))$.

It is well known that the properties of the hessian of the distance function in a riemannian manifold are closely related with the curvature of the manifold. Roughly speaking, positive curvature decreases the hessian of the distance function (i.e., gives a ``stronger'' semiconcavity), while negative curvature increases it. In particular, it can be proved that the square of the distance function is semiconcave with constant $2$ only if the manifold has nonnegative sectional curvature. Even in the case when the curvature has arbitrary sign, however, it turns out that we can replace the square by another function of the distance which enjoys the properties we need for our application. The crucial result for our purposes is the following.

\begin{theorem} \label{semman}
Let $M$ be a riemannian manifold and let $\Omega \subset M$ be any open set (not necessarily smooth). Suppose that all sectional curvatures $\kappa$ at any point of $\Omega$ satisfy $\kappa \geq -\alpha^2$ for some $\alpha>0$ and define 
$v(x)=\cosh (\alpha d_{\partial \Omega}(x))$. Then, given any convex compact set $C \subset \Omega$, the function $v$ is semiconcave on $C$ with constant $K=\alpha^2 \max_{C}v(x)$.
\end{theorem}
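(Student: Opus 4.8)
The plan is to deduce the semiconcavity inequality \eqref{eq:semiri}, with the stated constant $K=\alpha^2\max_C v$, from a pointwise upper bound on the Hessian of $v$, and to obtain that bound by comparison with the model space of constant sectional curvature $-\alpha^2$. The guiding observation is that $\cosh(\alpha\rho)$ plays, when the curvature equals $-\alpha^2$, the role that $\tfrac12\rho^2$ plays in the flat case: if $\rho$ denotes the distance from a point in $\mathbb H^n_{-\alpha^2}$, then $D^2\rho=\alpha\coth(\alpha\rho)(\mathrm{Id}-d\rho\otimes d\rho)$ and $|D\rho|=1$, so that $D^2[\cosh(\alpha\rho)]=\alpha^2\cosh(\alpha\rho)\,d\rho\otimes d\rho+\alpha\sinh(\alpha\rho)\,D^2\rho=\alpha^2\cosh(\alpha\rho)\,\mathrm{Id}$ identically. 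Under the hypothesis $\kappa\ge-\alpha^2$ the Hessian comparison theorem turns the identity for $D^2\rho$ into the inequality $D^2\rho\le\alpha\coth(\alpha\rho)(\mathrm{Id}-d\rho\otimes d\rho)$ at points where $\rho$ is smooth, whence $D^2[\cosh(\alpha\rho)]\le\alpha^2\cosh(\alpha\rho)\,\mathrm{Id}$.

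To exploit this, I would fix $x_0\in C$ and choose a foot point $y_0\in\partial\Omega$ with $r_0:=d_{\partial\Omega}(x_0)=d(x_0,y_0)$; since $C$ is compact and contained in the open set $\Omega$ we have $r_0\ge\min_C d_{\partial\Omega}>0$, so no singularity arises from $\rho=0$. Because $y_0\in\partial\Omega$, the triangle inequality gives $d_{\partial\Omega}\le d(\cdot,y_0)$ everywhere, with equality at $x_0$; as $\cosh$ is increasing on $[0,\infty)$, the smooth function $\psi:=\cosh(\alpha\,d(\cdot,y_0))$ satisfies $\psi\ge v$ near $x_0$ with $\psi(x_0)=v(x_0)$, i.e.\ $\psi$ is a smooth upper support for $v$ at $x_0$ — provided $x_0$ is not in the cut locus of $y_0$, so that $\psi$ is of class $C^2$ near $x_0$. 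Writing $\rho=d(\cdot,y_0)$ and using $D^2\rho\,(D\rho)=0$, $|D\rho|=1$ and the comparison estimate $D^2\rho(x_0)\le\alpha\coth(\alpha r_0)(\mathrm{Id}-d\rho\otimes d\rho)$, the two terms in $D^2\psi=\alpha^2\cosh(\alpha\rho)\,d\rho\otimes d\rho+\alpha\sinh(\alpha\rho)\,D^2\rho$ combine at $x_0$ to give $D^2\psi(x_0)\le\alpha^2\cosh(\alpha r_0)\,\mathrm{Id}=\alpha^2 v(x_0)\,\mathrm{Id}\le\alpha^2\max_C v\;\mathrm{Id}=K\,\mathrm{Id}$. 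By the standard characterisation of semiconcavity through the existence at each point of a $C^2$ upper support with Hessian $\le K\,\mathrm{Id}$ — integrated along minimising geodesics inside the convex set $C$ and using the local Lipschitz continuity of $v$ — this pointwise information yields exactly \eqref{eq:semiri} with constant $K$; cf.\ the one-point version of inequality (10.14) in \cite{Villani}.

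The one real difficulty is that $\psi=\cosh(\alpha\,d(\cdot,y_0))$ fails to be $C^2$ at $x_0$ precisely when $x_0$ lies in the cut locus of its foot point $y_0$. Perturbing $y_0$ into $\Omega$ does not help: replacing $y_0$ by the point of the minimising geodesic $\sigma$ from $y_0$ to $x_0$ a distance $t$ before $x_0$ forces the comparison radius down to $t$ and makes the factor $\alpha\coth(\alpha t)$ blow up as $t\downarrow0$, destroying the estimate in the directions orthogonal to $\sigma$. Instead I would argue at points of twice differentiability: $d_{\partial\Omega}$, hence $v=\cosh(\alpha d_{\partial\Omega})$, is locally semiconcave on $\Omega$ (a smooth nondecreasing function of a locally semiconcave function is locally semiconcave), so $v$ is twice differentiable at a.e.\ $x_0\in C$ by Alexandrov's theorem, and at such a point the second variation formula for the minimising geodesic $\sigma$ from $y_0$ to $x_0$, together with the absence of focal points of the level sets of $d_{\partial\Omega}$ along the open segment $\sigma$ (a consequence of minimality) and the bound $\kappa\ge-\alpha^2$, yields directly $D^2 d_{\partial\Omega}(x_0)\le\alpha\coth(\alpha r_0)(\mathrm{Id}-Dd_{\partial\Omega}\otimes Dd_{\partial\Omega})$ without any smoothness of $\rho$. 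Consequently $D^2 v(x_0)\le K\,\mathrm{Id}$ for a.e.\ $x_0\in C$; since the singular part of the distributional Hessian of a semiconcave function is already nonpositive, an almost-everywhere bound $D^2 v\le K\,\mathrm{Id}$ on $C$ upgrades to the genuine semiconcavity of $v$ with constant $K$, which is the assertion of the theorem. I expect the bookkeeping in this last step — matching the Alexandrov Hessian with the absolutely continuous part of the distributional Hessian and running the comparison through non-smooth foot points — to be the most delicate part, while the computation in the smooth case is short and elementary.
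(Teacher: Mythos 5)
Your plan rests on exactly the same analytic engine as the paper's proof: the Hessian comparison estimate
\[
D^2\bigl[\cosh(\alpha\,d(\cdot,y_0))\bigr]\ \le\ \alpha^2\cosh(\alpha\,d(\cdot,y_0))\,\mathrm{Id}
\]
under $\kappa\ge-\alpha^2$, followed by the observation that $\cosh(\alpha d_{\partial\Omega})=\inf_{y\in\partial\Omega}\cosh(\alpha\,d(\cdot,y))$, so the bound passes to the infimum. The paper packages this by citing Lemma 57 in Ch.~11 of \cite{Petersen} (which states the inequality $D^2\cosh(d_p)\le\cosh(d_p)\,\mathrm{Id}$ in the appropriate weak/barrier sense, cut locus included), then invokes a homothety for general $\alpha$ and the stability of semiconcavity under infima. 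You reconstruct the computation, which is fine, and you would finish by the barrier characterisation of semiconcavity, which is also fine. In outline this is the same route, not a different one.

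Where your proposal goes off the rails is in the treatment of the cut locus. You dismiss perturbing $y_0$ along the minimising geodesic on the grounds that it ``forces the comparison radius down to $t$'' and makes $\alpha\coth(\alpha t)$ blow up. That is not what the standard (Calabi) perturbation does. One replaces $y_0=\sigma(0)$ by $y_0^\epsilon=\sigma(\epsilon)$ for \emph{small} $\epsilon>0$, so that the new radius to $x_0$ is $r_0-\epsilon$, not $\epsilon$; the auxiliary function $\psi_\epsilon:=\cosh\bigl(\alpha(\epsilon+d(\cdot,y_0^\epsilon))\bigr)$ still satisfies $\psi_\epsilon\ge v$ near $x_0$ with $\psi_\epsilon(x_0)=v(x_0)$, it \emph{is} $C^2$ near $x_0$ since $x_0$ is no longer a cut point of $y_0^\epsilon$ along $\sigma$, and the comparison factor $\alpha\coth(\alpha(r_0-\epsilon))$ stays bounded and tends to $\alpha\coth(\alpha r_0)$ as $\epsilon\downarrow0$, whence $D^2\psi_\epsilon(x_0)\to\alpha^2 v(x_0)\,\mathrm{Id}$. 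This is precisely the content of the ``weak form'' of the Hessian bound in \cite[\S 9.3.1]{Petersen} that the paper leans on. By rejecting this mechanism, your alternative route does not in fact escape the difficulty: the second-variation/Jacobi-field argument you propose instead still degenerates when $x_0$ is conjugate to $y_0$ along $\sigma$, so you would be forced back to the same perturbation. Moreover, the phrase ``focal points of the level sets of $d_{\partial\Omega}$'' conflates the hypersurface-distance comparison (which needs a second fundamental form and is unavailable for the non-smooth $\partial\Omega$ of the theorem) with the point-distance comparison; the correct object to compare against is $d(\cdot,y_0)$, not the level sets of $d_{\partial\Omega}$. The remaining steps---reducing to Alexandrov points and using nonpositivity of the singular part of the distributional Hessian of a semiconcave function---are standard and would close the argument, but they add a detour that the barrier/Calabi formulation already avoids. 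In short: your approach is the paper's approach, but the one place where you deviate (dismissing the perturbation trick) is a genuine error, and the replacement you offer inherits the very problem you were trying to sidestep.
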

\begin{proof} In the case $\alpha=1$ the result is a direct consequence of Lemma 57 in Chapter 11 in \cite{Petersen}, where it is stated that, if the sectional curvatures of $M$ are greater than $-1$ and if $d_p(\cdot)$ is the distance function from a given point $p \in M$, then the function $v(x)=\cosh (d_p(x))$ satisfies $D^2 v(x) \leq v(x) Id$ everywhere. Since $v(x)$ is not everywhere smooth, in general, the above bound on the hessian is understood in a suitable weak form \cite[\S 9.3.1]{Petersen} which easily implies the stated semiconcavity estimate. Once the property is established for $d_p$, it extends to the distance from an arbitrary set because the infimum of semiconcave functions with uniformly bounded semiconcavity constants is semiconcave with the same constant.  The case of a general $\alpha$ is immediately reduced to this one by a homothety of the metric. Finally, we observe that the behaviour of the metric outside $\Omega$ does not influence $d_{\partial \Omega}(x)$ for $x \in \Omega$, and therefore it suffices to assume the bound on the sectional curvature on $\Omega$. 
\end{proof}

\begin{remark}\label{r.semman}
{\rm We mention that, if the infimum of the sectional curvature is zero or positive, then it is possible to use functions different from the hyperbolic cosine which give a sharper semiconcavity estimate (e.g. the square of the distance in the euclidean case); the result of Theorem \ref{semman}, however, suffices for the purposes of this paper. This theorem also imples that the distance function itself is locally semiconcave in $\Omega$. However, the semiconcavity constant in general becomes unbounded as $\partial \Omega$ is approached.
%, because the inverse of the hyperbolic cosine becomes singular as the argument approaches zero.
} \end{remark}

We now proceed to show that Theorem \ref{th:pro1} and Corollary \ref{c.prop} can be  extended to manifolds.

\begin{theorem}\label{th:promfd} 
For a given open bounded subset $\Omega \subset M$, let us set $u(x)=d_{\partial \Omega}(x)$ for $x \in \Omega$. For every $x_0\in\Omega$ there exists a
unique lipschitz continuous arc $\gamma :[0,+\infty [\rightarrow \Omega$
such that 
\begin{equation}
\label{eq:gencmfd}
\gamma ' (t)\in d^+u(\gamma(t))\quad t\in [0,+\infty [\text{ a.e.}
\qquad \gamma (0)=x_0.
\end{equation}
The arc $\gamma$ satisfies properties analogous to the ones of Theorem \ref{th:pro1} and Corollary~\ref{c.prop} in the euclidean case. In particular, the right derivative $\gamma'_+ (t)$ exists for every $t \geq 0$, is continuous from the right and satisfies \eqref{eq:gencmfd} everywhere. The derivative of $u$ along $\gamma$ satisfies
\begin{equation}
\label{genc2}
\frac{d}{dt^+} u(\gamma(t)) = |\gamma'_+(t)|^2,\qquad t\in [0,+\infty [\, .
\end{equation}
Moreover, for any $t_0 \geq 0$ such that $\gamma(t_0) \in \Sigma(u)$ there exists $\sigma=\sigma(t_0)>0$ such that $\gamma(t) \in \Sigma(u)$ for all $t \in [t_0,t_0+\sigma[\,$. 
\end{theorem}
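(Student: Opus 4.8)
The plan is to reduce Theorem~\ref{th:promfd} to the euclidean Theorem~\ref{th:pro1} and Corollary~\ref{c.prop} by carrying over each ingredient through local coordinates and riemannian tools. First I would fix a coordinate chart near a point and observe that in such a chart the eikonal equation for $u=d_{\partial\Omega}$ becomes exactly \eqref{e.ik} with $A(\cdot)$ the matrix of the metric tensor, which is symmetric positive definite with $C^1$ dependence on $x$ (actually $C^\infty$). Since $u$ is locally semiconcave in $\Omega$ by Remark~\ref{r.semman} (or directly by Theorem~\ref{semman}), it is a solution of \eqref{e.ik} in the sense defined before Theorem~\ref{th:pro1}, and the differential inclusion \eqref{eq:gencmfd} written intrinsically as $\gamma'(t)\in d^+u(\gamma(t))$ translates, via the metric isomorphism $d^+u(x)\leftrightarrow D^+u(x)=A(x)^{-1}d^+u(x)$ between covectors and vectors, into the coordinate inclusion $\gamma'(t)\in A^{-1}(\gamma(t))D^+u(\gamma(t))$ of \eqref{eq:genc1}. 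Thus existence, uniqueness on a maximal interval, existence and right-continuity of $\gamma'_+$, and properties (i)--(v) of Theorem~\ref{th:pro1} all apply locally; in particular \eqref{eqmon} gives \eqref{genc2} once one notes $\langle A(\gamma)p,p\rangle=|p|^2_{g}$ equals the squared riemannian norm of $\gamma'_+(t)$. The propagation statement (the last sentence of the theorem) is then precisely property~(iv) of Theorem~\ref{th:pro1} read in the chart, valid on a short interval $[t_0,t_0+\sigma[\,$.

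Next I would promote the local interval of existence to $[0,+\infty[\,$. As in the proof of Corollary~\ref{c.prop}, if the maximal interval were $[0,T[\,$ with $T<+\infty$, then since $\Omega$ is bounded and $M$ is complete, $\gamma(t)$ would have to approach $\partial\Omega$, i.e. $u(\gamma(t))\to 0$; but by \eqref{genc2} the function $t\mapsto u(\gamma(t))$ is nondecreasing and starts at a positive value, a contradiction. One small point requiring care here: boundedness of $\Omega$ plus completeness of $M$ guarantees $\overline\Omega$ is compact, so $\gamma$ stays in a compact set and the only way the solution can fail to continue is by reaching $\partial\Omega$; this also lets one patch together the local charts, since finitely many cover any compact piece of the trajectory.

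The main obstacle is making the coordinate-change argument genuinely chart-independent, i.e. checking that the intrinsic objects (superdifferential $d^+u$ defined via $\exp_x$, semiconcavity via geodesics) match the euclidean-chart objects used in Section~\ref{se:eucli} closely enough that Theorem~\ref{th:pro1} really transfers. Concretely one must verify: (a) the intrinsic $d^+u(x)$ of the riemannian definition coincides, under the chart, with the Fr\'echet superdifferential $D^+u$ of the coordinate representative — this holds because $\exp_x(v)=x+v+o(|v|)$ in normal-type expansions and semiconcavity controls the error uniformly, so the two superdifferentials agree; (b) riemannian semiconcavity on a convex set implies coordinate semiconcavity on slightly smaller sets, with the constant controlled by Theorem~\ref{semman} and the $C^1$ norm of the chart — this is standard and is exactly the content behind \eqref{monotone} versus Proposition~\ref{p.semic}(iii); and (c) the uniqueness/continuous-dependence Lemma~\ref{th:unique} applies, which it does since its only hypotheses are semiconcavity of $u$ and $C^1$, nondegenerate $A(\cdot)$. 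Once (a)--(c) are in place, every assertion of the theorem — existence, uniqueness, the regularity and right-continuity of $\gamma'_+$, formula \eqref{genc2}, and the short-time propagation of singularities — is a direct translation of the corresponding euclidean statement, and I would present the proof as a sequence of such translations rather than redo any estimate. I expect the write-up to be short, with the bulk of the text devoted to stating carefully why the intrinsic and chart notions coincide and invoking the completeness/boundedness argument for global existence.
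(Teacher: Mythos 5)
Your proposal follows essentially the same route as the paper: pass to a local coordinate chart where $u$ solves the eikonal equation \eqref{e.ik} with $A=G$ the metric matrix, invoke Theorem~\ref{th:pro1} for local existence, uniqueness, right-continuity of $\gamma'_+$, formula \eqref{genc2}, and short-time propagation, and then use the Corollary~\ref{c.prop} argument (monotonicity of $t\mapsto u(\gamma(t))$ from \eqref{genc2} plus boundedness of $\Omega$) to extend to $[0,+\infty[\,$. The paper's proof is terser — it simply asserts that the eikonal equation and the differential inclusion transfer to the chart — whereas you spell out the chart-independence checks (that the intrinsic $d^+u$ defined via $\exp_x$ matches the coordinate Fr\'echet superdifferential $D^+\bar u$ up to the metric isomorphism, and that riemannian semiconcavity yields coordinate semiconcavity), which is a helpful supplement but not a different argument.
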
 
\noindent
Notice that, since $\gamma'(t) \in T_{\gamma(t)}M$ and $d^+u(\gamma(t)) \subset T_{\gamma(t)}^*M$, in (\ref{eq:gencmfd})  the two spaces are identified via the canonical isomorphism. 

\begin{proof} The result can be easily deduced from the euclidean case by using a local coordinate chart. In fact, if $\phi:U \to M$ is a local chart around $x_0$, where $U \subset \rl^n$, and  $G(x)$ is the matrix associated to the scalar product on $T_xM$ in the chart $\phi$, then it is easy to see that  the function $\bar u:=u \circ \phi :U \to \rl$  satisfies
$$
\sum_{i,j=1}^n g^{ij}(x) \frac{\partial \bar u}{\partial x_i}\frac{\partial \bar u}{\partial x_j}=1,
$$
where $g^{ij}(x)$ are the entries of the inverse matrix $G^{-1}(x)$. 
Thus, the assertions of the theorem follow from the corresponding ones of Theorem \ref{th:pro1}. We observe, in particular, that the equation satisfied by the generalized characteristics can be written in local coordinates as
\begin{equation}\label{e:gcr}
\gamma'(t) \in G^{-1} (\gamma(t)) D^+\bar u(\gamma(t)),
\end{equation}
where $D^+ \bar u$ is the euclidean superdifferential of $\bar u$. Finally, the property that $\gamma(t)$ can be defined for $t \in [0,+\infty)$ is obtained by the same argument of Corollary \ref{c.prop}.
  \end{proof}

We now show that Theorem \ref{th:acn}  can be generalized to manifolds.

\begin{theorem}
\label{th:acn2} 
Let $M$ be any smooth complete riemannian manifold, let $\Omega \subset M$ any bounded open set, let $u(\cdot)=d_{\partial \Omega}(\cdot)$ and let $\gamma(\cdot)$ be the arc of Theorem \ref{th:promfd}. If $\gamma(t_0) \in \Sigma(u)$ for some $t_0 \geq 0$ then $\gamma(t) \in \Sigma(u)$ for all $t \in [t_0,+\infty[$.
\end{theorem}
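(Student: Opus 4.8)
The plan is to transplant the proof of Theorem~\ref{th:acn} to $M$, with two substitutions. Since the squared distance need not be semiconcave with constant $2$ here, I replace it by the function $v(x)=\cosh(\alpha\, d_{\partial\Omega}(x))$ of Theorem~\ref{semman}, where $\alpha>0$ is fixed with all sectional curvatures of $\Omega$ bounded below by $-\alpha^2$ (possible since $\overline\Omega$ is compact); and I replace the euclidean difference quotients by geodesic ones. After replacing $\gamma$ by $\gamma(t_0+\cdot)$ I may assume $t_0=0$. Write $\delta(s)=d_{\partial\Omega}(\gamma(s))$, $p(s)=\gamma'_+(s)$ and $e(s)=|p(s)|^2$. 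By Theorem~\ref{th:promfd}, $p(s)\in d^+u(\gamma(s))$; by \eqref{genc2}, $\delta$ is nondecreasing with $\delta'_+(s)=e(s)$, so $\delta(s)\ge\delta(0)=:\delta_0>0$; and the analogue of Theorem~\ref{th:pro1}(iii) gives $\gamma(s)\in\Sigma(u)\iff e(s)<1$, in particular $e(0)<1$. It therefore suffices to show $e(t)<1$ for every $t>0$, and I fix such a $t$.

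First I would record a monotonicity inequality for $v$ along $\gamma$ with the \emph{sharp} coefficient. As $u>0$ and $r\mapsto\cosh(\alpha r)$ is smooth and increasing on a neighbourhood of the compact range of $u|_{\gamma([0,t])}$, the chain rule for superdifferentials (as in Proposition~\ref{p.d2}) gives $d^+v(\gamma(s))=\beta(s)\,d^+u(\gamma(s))$ with $\beta(s):=\alpha\sinh(\alpha\delta(s))$, so $\beta(s)p(s)\in d^+v(\gamma(s))$. Since $\gamma([0,t])$ lies in the compact set $\{x\in\Omega:d_{\partial\Omega}(x)\ge\delta_0\}$, on which the convexity radius is bounded below, there is $h_*>0$ such that for all $s\in[0,t]$ and $0<h<h_*$ the points $\gamma(s),\gamma(s+h)$ are joined by a unique minimizing geodesic inside a convex ball $\overline{B(\gamma(s),2h)}\subset\Omega$. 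Applying Theorem~\ref{semman} on this ball together with the monotonicity estimate \eqref{monotone} (for the function $v$, with tangent vector $V=V(s,h)\in T_{\gamma(s)}M$ determined by $\exp_{\gamma(s)}(V)=\gamma(s+h)$ and $\Pi=\Pi(s,h)$ the parallel transport along the connecting geodesic) I obtain
\begin{multline*}
\big\langle\beta(s+h)\,\Pi^{-1}(p(s+h))-\beta(s)p(s),\,V\big\rangle\\
\le\ \alpha^2\big(v(\gamma(s))+\omega(2h)\big)\,d(\gamma(s),\gamma(s+h))^2,
\end{multline*}
where $\omega$ is a modulus of continuity of $v$ on $\overline\Omega$; the crucial point is that the coefficient tends to the sharp value $\alpha^2 v(\gamma(s))$ as $h\downarrow0$.

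Next I would turn this into a differential inequality and integrate it. Put $\gamma_h(s)=V(s,h)/h\in T_{\gamma(s)}M$ and $e_h(s)=|\gamma_h(s)|^2=h^{-2}d(\gamma(s),\gamma(s+h))^2$; then $0\le e_h\le1$, $s\mapsto e_h(s)$ is Lipschitz, and $\gamma_h(s)\to p(s)$ (hence $e_h(s)\to e(s)$) as $h\downarrow0$ for every $s$, because $\exp_{\gamma(s)}$ is a diffeomorphism near $0$ with differential the identity. By the first variation of the energy functional, at a.e.\ $s$ one has $\frac{d}{ds}e_h(s)=\tfrac{2}{h^2}\langle V(s,h),\,\Pi^{-1}(p(s+h))-p(s)\rangle$, the geodesic analogue of the euclidean identity $\langle\gamma_h',\gamma_h\rangle=\tfrac12(|\gamma_h|^2)'$. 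Splitting $\beta(s+h)\Pi^{-1}(p(s+h))-\beta(s)p(s)$ as $\beta(s+h)\big(\Pi^{-1}(p(s+h))-p(s)\big)+\big(\beta(s+h)-\beta(s)\big)p(s)$ and inserting this, the inequality above becomes, for a.e.\ $s\in[0,t]$,
\[
\tfrac12\,\beta(s+h)\,e_h'(s)\ \le\ \alpha^2\big(v(\gamma(s))+\omega(2h)\big)\,e_h(s)\ -\ \beta_h(s)\,\langle p(s),\gamma_h(s)\rangle ,
\]
where $\beta_h(s)=h^{-1}(\beta(s+h)-\beta(s))\to\alpha^2v(\gamma(s))e(s)$ and $\langle p(s),\gamma_h(s)\rangle\to e(s)$ as $h\downarrow0$. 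From here the separation-of-variables computation of Theorem~\ref{th:acn} applies essentially verbatim: for fixed $\ep>0$ divide by $(e_h(s)+\ep)(1+\ep-e_h(s))\ge\ep(1+\ep)>0$, integrate over $[0,t]$ and exponentiate; let $h\downarrow0$ (all integrands are bounded, the denominators are $\ge\ep(1+\ep)$, and $\beta(s+h)\ge\alpha\sinh(\alpha\delta_0)$), using $\tfrac{e(1-e)}{(e+\ep)(1+\ep-e)}\le1$ to bound the limiting integral by $\int_0^t\tfrac{2\alpha^2 v(\gamma(s))}{\beta(s)}\,ds=\int_0^t 2\alpha\coth(\alpha\delta(s))\,ds<\infty$; finally let $\ep\downarrow0$. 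This yields
\[
\frac{e(t)}{1-e(t)}\ \le\ \frac{e(0)}{1-e(0)}\,\exp\!\Big(\int_0^t 2\alpha\coth(\alpha\delta(s))\,ds\Big),
\]
so $e(t)<1$ (as $e(0)<1$), and hence $\gamma(t)\in\Sigma(u)$, which is the assertion.

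The routine parts are the chain rule for $d^+v$, the uniform choice of convex balls, the Lipschitz regularity of $e_h$, and the convergence $\gamma_h\to p$. The genuine obstacle, invisible in the euclidean case, is the combined step of producing the monotonicity inequality with the \emph{sharp} coefficient $\alpha^2 v(\gamma(s))$ and of recognising its left-hand side — via the first variation formula and parallel transport — as $\tfrac12\beta(s+h)e_h'(s)$: this is exactly where the special function $\cosh(\alpha\,d_{\partial\Omega})$ of Theorem~\ref{semman}, whose Hessian bound carries precisely the coefficient that makes the logistic threshold land at $1$, becomes essential.
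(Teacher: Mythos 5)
Your argument is correct and follows the paper's own proof essentially step by step: replace $d_{\partial\Omega}^2$ by $\cosh(\alpha\,d_{\partial\Omega})$ and use Theorem~\ref{semman}, combine the chain rule for superdifferentials with the monotonicity estimate \eqref{monotone}, rewrite the left-hand side of the resulting inequality as $\tfrac12\beta(s+h)\,\frac{d}{ds}|v_h(s)|^2$ via the first variation of the squared distance (the paper phrases this through Gauss' Lemma), and then run the same $\varepsilon$-regularised separation-of-variables limit $h\downarrow0$, $\varepsilon\downarrow0$ as in Theorem~\ref{th:acn}. The only cosmetic differences are how you bound the semiconcavity constant on the small ball (a modulus of continuity of $\cosh(\alpha u)$ rather than the bound $\sup_C u\le u(\gamma(s))+h$ used in the paper) and the algebraic form in which you write the final logistic estimate, both of which are equivalent to the paper's.
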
 
\begin{proof}
Since $\Omega$ is bounded, we can find a finite value $\alpha>0$ such that the sectional curvature is everywhere greater than $-\alpha^2$ on $\Omega$ and Theorem \ref{semman} can be applied. 

Let $x_0 \in \Omega$ and let $\gamma(t)$ be the solution of the differential inclusion (\ref{eq:gencmfd}). Let us set
$$
p(t)=\gamma'_+(t), \qquad \delta(t)=u(\gamma(t)).
$$
We now suppose  that $\gamma(t_0) \in \Sigma(u)$. We then fix any $s \geq t_0$, and consider $t>s$ close enough to $s$ so that $\gamma(s)$ and $\gamma(t)$ both belong to a neighborhood where any two points are connected by a unique minimal geodesic.

Let us call $v(s,t)$ the vector in $T_{\gamma(s)}M$ such that $\gamma(t)=\exp_{\gamma(s)}(v(s,t))$. Also, we denote by $\Pi_{s,t}:T_{\gamma(s)}M \to T_{\gamma(t)}M$ the isometry induced by the parallel transport along the geodesic connecting $\gamma(s)$ to $\gamma(t)$, and we set $\Pi_{t,s}=\Pi^{-1}_{s,t}$ to denote the inverse map, which is associated to the same geodesic with the opposite direction.

Using Gauss Lemma (see e.g. Lemma 3.3.5 in \cite{DC}), we obtain that
$$
\frac{\partial}{\partial t^+}\,\frac {d(\, \gamma(s) \, , \, \gamma(t) \, )^2}{2} = \langle \Pi_{s,t} v(s,t), \gamma'_+(t) \rangle = \langle  v(s,t), \Pi_{t,s} p(t) \rangle,
$$
and, similarly,
$$
\frac{\partial}{\partial s^+}\,\frac {d(\, \gamma(s) \, , \, \gamma(t) \, )^2}{2} =  \langle -v(s,t), p(s) \rangle\,.
$$
It follows, for $h>0$ small enough,
\begin{multline}\label{dernorm}
\frac{d}{ds^+}  \frac {d(\, \gamma(s) \, , \, \gamma(s+h) \, )^2}{2} = \langle \, \Pi_{s+h,s} p(s+h) - p(s) \, , \,  v(s,s+h) \, \rangle \,.
\end{multline}

Let us now set $\phi(\tau)=\cosh(\alpha \, \tau)$, for $\tau \in \rl$. Then, Theorem \ref{semman} gives a semiconcavity estimate on the function $\phi \circ u$. Let us also observe that,  by the definition of superdifferential,
\begin{equation}\label{e1}
q \in d^+u(x) \ \Longleftrightarrow \ \phi'(u(x)) \, q \in d^+(\phi \circ u)(x).
\end{equation}
Let us denote by $C \subset M$ the spherical neighbourhood of radius $h$ centered at $\gamma(s)$. If $h$ is sufficiently small, $C$ is a convex set which includes the point $\gamma(s+h)$, and in addition $\sup_C u(x) \leq u(\gamma(s))+h$. By Theorem \ref{semman}, we have that  $(\phi \circ u)(x)$ is semiconcave on $C$ with constant given by $K=\alpha^2 \phi (\dt(s)+h)$. 
Therefore, using (\ref{monotone}), (\ref{eq:gencmfd}), \eqref{e1} and the property that $\alpha^2 \phi=\phi''$, we find
\begin{eqnarray*}
&& \langle \, \phi'(\dt(s+h)) \, \Pi_{s+h,s}p(s+h)- \phi'(\delta(s))\, p(s) \, , \, v(s,s+h) \,\rangle \\
&\leq &
\phi'' (\dt(s)+h) \, d(\gamma(s),\gamma(s+h))^2.
\end{eqnarray*}
We rewrite the above inequality as
\begin{eqnarray}
 & & \phi'(\dt(s+h)) \, \langle \Pi_{s+h,s}p(s+h)- p(s),v(s,s+h) \rangle \nonumber \\
 & \leq & \phi'' (\dt(s)+h) \, d(\gamma(s),\gamma(s+h))^2 \label{e7a} \\
& & -[\phi'(\dt(s+h))-\phi'(\dt(s))] \, \langle p(s), v(s,s+h) \rangle. \nonumber
\end{eqnarray}
Let us set
$$
v_h(s)=\frac{v(s,s+h)}{h}, \qquad \delta_h(s) =\frac{\delta(s+h)-\delta(s)}{h}.
$$
Dividing  inequality \eqref{e7a} by $h^2$, we obtain, using formula (\ref{dernorm}) and the fact that $d(\gamma(s),\gamma(s+h))=|v(s,s+h)|=h|v_h(s)|$,
\begin{eqnarray}
%\lefteqn{ \frac{\phi'(\dt(s+h))}2 \frac{d}{ds^+}|v'_h(s)|^2 
% \leq   \phi'' (\dt(s)+h) |v_h(s)|^2} \nonumber \\ \label{e8} \\
\frac{\phi'(\dt(s+h))}2 \frac{d}{ds^+}|v'_h(s)|^2 
& \leq &  \phi'' (\dt(s)+h) |v_h(s)|^2  \nonumber \\ \label{e8} \\
& & -\frac{\phi'(\dt(s+h))-\phi'(\dt(s))}{h} \langle p(s), v_h(s) \rangle. \nonumber
\end{eqnarray}
We rewrite for simplicity this inequality as
\begin{equation}\label{e9}
\frac{d}{ds^+} |v_h(s)|^2 \leq \psi_h(s),
\end{equation}
where we have set
\begin{eqnarray}
\psi_h(s) & = &  \left. \frac{2}{\phi'(\dt(s+h))} \right[ \phi'' (\dt(s)+h) |v_h(s)|^2  \nonumber \\
&& \label {e9a} \\
& &  \left.-\frac{\phi'(\dt(s+h))-\phi'(\dt(s))}{h} \langle p(s), v_h(s) \rangle \right]. \nonumber
\end{eqnarray}
Now, we observe that
\begin{equation}\label{eq:derv}
\lim_{h \downarrow 0} v_h(s)=\gamma'_+(s)=p(s),
\end{equation}
for a.e. $s$, which can be easily checked for instance by using local coordinates around $\gamma(s)$. More formally, the above relation follows from the fact that $\lim_{h \to 0} v_h(s)=\left.\frac{d}{dh}\right|_{h=0} v(s,s+h)$, that $\exp_{\gamma(s)}v(s,s+h)=\gamma(s+h)$ and that the differential of $\exp$ at zero is the identity (see the proof of Proposition 18 in Ch. 5 of \cite{Petersen}); therefore, the derivatives $\left.\frac{d}{dh}\right|_{h=0} v(s,s+h)$ and $\left.\frac{d}{dh}\right|_{h=0} \gamma(s+h)$ coincide.

In addition, using Theorem \ref{th:promfd}, we find that for a.e. $s$
\begin{eqnarray*}
\lefteqn{ \lim_{h \to 0} \frac{\phi'(\dt(s+h))-\phi'(\dt(s))}{h}} \\
& = & \phi''(\dt(s))  \lim_{h \to 0} \frac{\delta(s+h)-\delta(s)}{h} =  \phi''(\dt(s)) |p(s)|^2.
\end{eqnarray*}
Therefore, we see that the function $\psi_h(s)$ defined in \eqref{e9a} is uniformly bounded for $h>0$ small and  $s$ varying in a bounded interval. Moreover, for all $s \geq 0$,
\begin{eqnarray}
\lim_{h \downarrow 0} \psi_h(s) & = & 
2\frac{\phi''(\dt(s))}{\phi'(\dt(s))} \left( |p(s)|^2 - |p(s)|^4 \right) \nonumber \\
&= &
\frac{2 \alpha}{\tanh(\alpha\, \dt(s))} \left( |p(s)|^2 - |p(s)|^4 \right)\,.
\label{e:limphi}
\end{eqnarray}

From this point on, the proof proceeds as in the euclidean case. For $\ep>0$ sufficiently small, we obtain from \eqref{e9}
\begin{equation}\label{e10}
\frac{(|v_h(s)|^2)'}{(|v_h(s)|^2+\ep) (1+\ep-|v_h(s)|^2)} \leq \frac{\psi_h(s)}{(|v_h(s)|^2+\ep) (1+\ep-|v_h(s)|^2)}
\end{equation}
for  a.e. $s \geq t_0$. We assume for simplicity that $t_0=0$. Integrating over $[0,t]$, we have
$$
\log \Big ( \frac{|v_h(s)|^2+\ep}{1+\ep-|v_h(s)|^2}\Big )  \Big|_0^t  \le \Phi_{h,\ep}(t)
$$ 
where 
$$
\Phi_{h,\ep}(t)= (1+2\varepsilon)\int_0^t\frac{\psi_h(s)}{(|v_h(s)|^2+\varepsilon) (1+\varepsilon-|v_h(s)|^2)}\ ds\,.
$$
Therefore, 
\begin{equation}
\label{eq:e5b}
| v_h(t)|^2 \le \frac{e^{\Phi_{h,\ep}(t)} \xi_{h,\ep}(1+\varepsilon)-\varepsilon}{1+e^{\Phi_{h,\ep}(t)} \xi_{h,\ep}}.
\end{equation}
where 
$$
\xi_{h,\ep}:=\frac{|v_h(0)|^2+\varepsilon }{1+\varepsilon-|v_h(0)|^2 }.
$$
Using \eqref{e:limphi} we obtain
\begin{equation*}
\lim_{\ep \downarrow 0} \left( \lim_{h\downarrow 0}\Phi_{h,\ep}\right)=
\int_0^t \frac{2 \alpha}{\tanh(\alpha \delta(s))}\,ds\, =: \beta (t).
\end{equation*}
Letting first $h \downarrow 0$ and then $\ep \downarrow 0$ in \eqref{eq:e5b} we obtain, thanks to \eqref{eq:derv},
$$
|p(t)|^2\le \frac{|p(0)|^2e^{\beta (t)}}{|p(0)|^2 e^{\beta (t)}
  +1-|p(0)|^2},
$$
which implies that $|p(t)|^2<1$ for every $t \geq 0$.
\end{proof}

 \section{Homotopy equivalence}\label{se:homo}

Let us begin by recalling  the well-known notion of homotopy equivalence.
\begin{definition}
\label{def:homt}
Let $X$ and $Y$ be two topological spaces and let  
$$
f:X\to Y\ \text{ and }\ g:X\to Y
$$
be two continuous maps. We say that $f$ and $g$ are homotopic if there
exists a continuous map $H:X\times [0,1]\to Y$, called homotopy,  such that
$$
H(0,\cdot )=f(\cdot
)\ \text{ and }\   H(1,\cdot )=g(\cdot
). 
$$
Furthermore, we say that $X$ and $Y$ have
the same homotopy type if there exist continuous maps
$f:X\to Y$ and $g:Y\to X$ such that $g\circ f$ and $f\circ g$ are
homotopic to the identity on $X$ and $Y$, respectively. 
\end{definition} 
The following result is a direct consequence of Definition \ref{def:homt}. 
\begin{lemma}
\label{lemma:1}
Let $Y\subset X$. If there exists a continuous map
\begin{equation*}
H:X \times [0,1]\to X
\end{equation*}
such that 
\begin{enumerate}

\item[(a)] $H(x,0)=x$, for every $x\in X$, 

\item[(b)] $H(x,1)\in Y$, for every $x\in X$, and

\item[(c)] $H(x,t)\in Y$, for every $(x,t)\in Y\times
  [0,1]$, 

\end{enumerate} 
then $X$ and $Y$ have the same homotopy type. 
\end{lemma}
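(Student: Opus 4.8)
The plan is to recognize that conditions (a)--(c) say precisely that $Y$ is a deformation retract of $X$, and to extract the two maps and homotopies required by Definition \ref{def:homt} directly from $H$. First I would introduce the inclusion $i\colon Y\hookrightarrow X$, which is continuous, together with the map $r\colon X\to Y$ defined by $r(x)=H(x,1)$. By (b) this takes values in $Y$, and since $Y$ carries the subspace topology, $r$ is continuous as a map into $Y$ as soon as $x\mapsto H(x,1)$ is continuous as a map into $X$, which it is. We then take $f=r$ and $g=i$ in Definition \ref{def:homt}, so it remains to show $i\circ r\simeq \mathrm{id}_X$ and $r\circ i\simeq \mathrm{id}_Y$.

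For the first homotopy I would simply reparametrize $H$ to match the convention of Definition \ref{def:homt} that the homotopy parameter is the first variable: set $\widehat H\colon [0,1]\times X\to X$ by $\widehat H(t,x)=H(x,t)$, which is continuous. By (a) we have $\widehat H(0,\cdot)=\mathrm{id}_X$, and by (b) we have $\widehat H(1,\cdot)=i\circ r$; hence $\mathrm{id}_X$ and $i\circ r$ are homotopic.

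For the second homotopy I would restrict $H$ to $Y\times[0,1]$. Condition (c) guarantees $H(Y\times[0,1])\subset Y$, so $\widehat H$ restricts to a continuous map $[0,1]\times Y\to Y$ (using the subspace topology on $Y$ both as domain factor and as codomain). At $t=0$ this restriction equals $\mathrm{id}_Y$ by (a), and at $t=1$ it equals $r\circ i$ by (b); thus $\mathrm{id}_Y\simeq r\circ i$. Applying Definition \ref{def:homt} with these two homotopies yields that $X$ and $Y$ have the same homotopy type.

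There is no genuine obstacle in this argument; the only points that deserve (minor) care are the bookkeeping of which slot of $H$ plays the role of the homotopy parameter relative to the convention fixed in Definition \ref{def:homt}, and the elementary point-set facts that a map into a subspace is continuous as soon as it is continuous into the ambient space and that the restriction of a continuous map to a subspace is continuous.
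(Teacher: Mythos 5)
Your proof is correct, and it is the standard deformation-retract argument that the lemma is tacitly invoking. The paper itself gives no proof at all---it simply asserts the lemma is ``a direct consequence'' of Definition~\ref{def:homt}---so your write-up supplies exactly the routine verification the authors chose to omit, including the minor bookkeeping about which slot of $H$ carries the homotopy parameter and the point-set facts about continuity into a subspace.
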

We are now in a position to apply Theorem~\ref{th:acn2} in order to obtain the following homotopy equivalence result. 
\begin{theorem}
\label{theorem:1}
Let $\Omega$ be a bounded open subset of a smooth riemannian manifold $M$.
Then $\Omega$ has the same  homotopy type as $\Sigma(d_{\partial\Omega})$. 
\end{theorem}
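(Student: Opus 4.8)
The plan is to apply Lemma~\ref{lemma:1} with $X=\Omega$ and $Y=\Sigma(d_{\partial\Omega})$, constructing the required map $H$ by pushing every point of $\Omega$ along the generalized gradient flow of $d_{\partial\Omega}$ until it is absorbed into the singular set. For $x\in\Omega$, let $\gamma_x:[0,+\infty)\to\Omega$ be the unique arc of Theorem~\ref{th:promfd} with $\gamma_x(0)=x$; recall it stays in $\Omega$ for all time. Since $\Omega$ is bounded, $M:=\sup_{\Omega}d_{\partial\Omega}$ is finite, and I would set
$$
H(x,s):=\gamma_x\bigl((M+1)s\bigr),\qquad (x,s)\in\Omega\times[0,1].
$$
Then $H$ maps into $\Omega$, and $H(x,0)=x$, so condition (a) of Lemma~\ref{lemma:1} is immediate.

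Conditions (b) and (c) rest on the invariance of the singular set proved in Theorem~\ref{th:acn2}. For (c): if $x\in\Sigma(d_{\partial\Omega})$, then Theorem~\ref{th:acn2} applied with $t_0=0$ gives $\gamma_x(t)\in\Sigma(d_{\partial\Omega})$ for every $t\ge0$, hence $H(x,s)\in\Sigma(d_{\partial\Omega})$ for all $s\in[0,1]$. For (b): on any piece of the arc $\gamma_x$ contained in $\Omega\setminus\Sigma(d_{\partial\Omega})$ we have $|(\gamma_x)'_+(t)|=1$ by the manifold analog of Theorem~\ref{th:pro1}(iii), so $t\mapsto d_{\partial\Omega}(\gamma_x(t))$ has right derivative $1$ there by \eqref{genc2}; being continuous and bounded above by $M$, this forces
$$
T(x):=\sup\bigl\{\,t\ge0:\ \gamma_x(\sigma)\notin\Sigma(d_{\partial\Omega})\ \text{ for all }\sigma<t\,\bigr\}\le M-d_{\partial\Omega}(x)<M+1.
$$
For any $t>T(x)$ there is, by the very definition of $T(x)$, some $\sigma\in[T(x),t)$ with $\gamma_x(\sigma)\in\Sigma(d_{\partial\Omega})$, whence $\gamma_x(t)\in\Sigma(d_{\partial\Omega})$ by Theorem~\ref{th:acn2}; taking $t=M+1$ yields $H(x,1)\in\Sigma(d_{\partial\Omega})$, which is condition (b). I would stress that this step only uses that the flow enters $\Sigma(d_{\partial\Omega})$ within time $M$ and then never leaves it, so the possible failure of $\Sigma(d_{\partial\Omega})$ to be closed in $\Omega$ is harmless, and no continuity of the ``hitting time'' $T$ is needed.

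It remains to verify that $H$ is continuous, which amounts to the joint continuity of the flow map $(x,t)\mapsto\gamma_x(t)$ on $\Omega\times[0,+\infty)$. Continuity in $t$ is just the $1$-Lipschitz regularity of each $\gamma_x$; continuity in $x$, locally uniform in $t$, follows from the continuous-dependence estimate of Lemma~\ref{th:unique} in its Riemannian form. I expect this to be the main technical obstacle, because Lemma~\ref{th:unique} is only local: one has to iterate it along $\gamma_{x_0}$, covering the image of $\gamma_{x_0}$ --- which lies in the compact set $\{x\in\Omega:\ d_{\partial\Omega}(x)\ge d_{\partial\Omega}(x_0)\}$, since $d_{\partial\Omega}(\gamma_{x_0}(\cdot))$ is nondecreasing by \eqref{genc2} --- by finitely many sets $U\subset\subset\Omega$ and composing the resulting Lipschitz bounds, together with a short bootstrap ensuring that arcs issued from points near $x_0$ remain inside that cover on the relevant time interval. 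Once joint continuity is in hand, $H:\Omega\times[0,1]\to\Omega$ is a continuous map satisfying (a), (b), (c), and Lemma~\ref{lemma:1} yields that $\Omega$ and $\Sigma(d_{\partial\Omega})$ have the same homotopy type.
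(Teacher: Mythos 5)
Your proposal is correct and follows essentially the same strategy as the paper: apply Lemma~\ref{lemma:1} to the flow map $H(x,s)=\gamma_x(cs)$, use Theorem~\ref{th:acn2} for the invariance of $\Sigma(d_{\partial\Omega})$, deduce from the unit-speed identity \eqref{genc2} that every arc must hit $\Sigma(d_{\partial\Omega})$ within a uniformly bounded time, and invoke Lemma~\ref{th:unique} for continuity of $H$. The only cosmetic difference is that the paper fixes $T=2\diam(\Omega)$ and argues by contradiction, whereas you bound the hitting time directly via $\sup_\Omega d_{\partial\Omega}$; your extra remarks on iterating the local Lipschitz estimate to get joint continuity spell out a step the paper leaves implicit.
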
 
\noindent
Notice that the above theorem requires no regularity assumption on $\partial \Omega$.
\begin{proof}
In view of Lemma~\ref{lemma:1} it suffices to construct
a continuous map $$H: \Omega \times [0,1]\to \Omega$$
satisfying conditions $(a),(b)$, and $(c)$ above with $Y=\Sigma(d_{\partial\Omega})$.  

For any $x\in\Omega$, let $\gamma_x(\cdot)$ be the generalized characteristic starting at $x$. We claim that 
\begin{equation}
\label{eq:Ts}
\exists\, T>0~:~\forall x\in\Omega \quad \gamma_x(T)\in\Sigma(d_{\partial\Omega}). 
\end{equation}
Indeed, 
set $T=2\diam (\Omega)$ and let 
$x\in \Omega$. Arguing by contradiction, suppose $\gamma_x(T)\notin\Sigma(d_{\partial\Omega})$. Then, 
in light of Theorem~\ref{th:acn2}, $\gamma_x(t)\notin\Sigma(d_{\partial\Omega})$ for all $t\in [0,T]$. So,
$|\gamma'_x(t)|=1$ for every $t\in [0,T]$. Hence, owing to \eqref{genc2}, we have 
\begin{equation*}
d_{\partial \Omega}(\gamma_x(T))=d_{\partial \Omega}(x)+\int_0^T |\gamma'_x(t)|^2\,
dt=d_{\partial \Omega}(x)+T.
\end{equation*}
Then,  
$$
2\diam (\Omega )=T=d_{\partial \Omega}(\gamma_x(T))-d_{\partial \Omega}(x)\le \diam (\Omega) . 
$$
The above contradiction shows that \eqref{eq:Ts} holds true. Next, define 
$$
H(x,t)=\gamma_x(tT)\qquad (x,t)\in \Omega \times  [0,1]. 
$$
We point out that $H$ is a locally Lipschitz continuous map in view of Lemma~\ref{th:unique}. 
Moreover, on account of
(\ref{eq:Ts}) and Theorem~\ref{th:acn2},  
$H$ satisfies conditions $(b)$ and $(c)$ of Lemma~\ref{lemma:1}. 
This completes the proof.  
\end{proof}
The analysis of this paper on riemannian manifolds applies, in particular, to optimal exit time problems in $\R^n$. In the example below, we study a time optimal control problem, deducing a homotopy equivalence result that would be hard to obtain arguing just inside the euclidean framework.
\begin{example}\label{ex:mintime}
Let $F:\rl^n \to \rl^{n \times n}$ be a smooth function such that $\det F(x) \neq 0$ for all $x$ and let $\Omega \subset \rl^n$  be a bounded open set. For any given $x \in \Omega$ we consider the control system
\begin{equation}  \label{e:control}
\left\{
\begin{array}{l}
y'(t)= F(y(t)) \alpha(t), \\
y(0)=x, 
\end{array}\right.
\end{equation}
where $\alpha:[0,+\infty) \to B_1(0)$ is a  measurable function called the control. We denote by $y(\cdot;x,\alpha)$ the trajectory of \eqref{e:control}, and we define the exit time from $\Omega$ of the trajectory as
$$
\tau(x,\alpha)=\inf \{ t>0 ~:~ y(t;x,\alpha) \in \partial \Omega \} \in (0,+\infty].
$$
The {\em minimum time function} is defined as
$$
T(x)=\inf_{\alpha}\tau(x,\alpha), \qquad x \in \Omega.
$$
Under our hypotheses, it is well known that the infimum is attained and that $T(\cdot)$ is a semiconcave solution of the Hamilton--Jacobi--Bellman equation 
\begin{equation}\label{e:hjb}
H(x,DT(x))=1, \qquad x \in \Omega,
\end{equation}
where $H$ is defined as
$$
H(x,p)=\langle F(x) F^*(x) p,p \rangle
$$
with $F^*$ the transpose matrix. 

Let us consider the riemannian metric $g$ on $\rl^n$ induced by the scalar product with matrix
$$G(x):=(F^*)^{-1}(x) F^{-1}(x).$$
Then, using the subscripts $_e$ and $_g$ to distinguish between the euclidean and riemannian metrics, we have
\begin{eqnarray*}
|v|_g \leq 1\; \Longleftrightarrow \;\langle G(x) v, v \rangle_e \leq 1 \; \Longleftrightarrow\; | F^{-1}(x) v |_e \leq 1,
\end{eqnarray*}
which shows that an arc $y(\cdot)$ is an admissible trajectory for the control system \eqref{e:control} if and only $|y'(t)|_g \leq 1$. It follows that $T(x) \equiv d_{\partial \Omega}(x)$, where the distance function $d_{\partial \Omega}$ is taken with respect to the riemannian metric $g$.

Thus, the previous analysis can be applied to the singular set $\Sigma(T)$ of the minimum time function. In particular, recalling also \eqref{e:gcr}, we obtain that $\Sigma(T)$ is invariant under the flow induced by the differential inclusion
$$
\gamma'(t) \in G^{-1}( \gamma(t)) D^+T(\gamma(t)).
$$
The above inclusion, up to a factor $2$, can be written equivalently as  
$$\gamma'(t) \in  D_pH(\gamma(t),D^+T(\gamma(t))),
$$
which is the equation of the characteristics associated with \eqref{e:hjb}.
Moreover, Theorem~\ref{theorem:1} ensures that $\Omega$ and $\Sigma(T)$ have the same homotopy type.
\end{example}
%\begin{acknowledgements}
%If you'd like to thank anyone, place your comments here
%and remove the percent signs.
%\end{acknowledgements}

% BibTeX users please use one of
%\bibliographystyle{spbasic}      % basic style, author-year citations
%\bibliographystyle{spmpsci}      % mathematics and physical sciences
%\bibliographystyle{spphys}       % APS-like style for physics
%\bibliography{}   % name your BibTeX data base

% Non-BibTeX users please use

\end{document}